\title{The \Apery\ Numbers As a\\
Stieltjes Moment Sequence}
\author{G. A. Edgar}
\date{January 20, 2017\\Appendix, August 15, 2020}
\theoremstyle{plain}
\newtheorem{pr}{Proposition}
\newtheorem{thm}[pr]{Theorem}
\newtheorem{cor}[pr]{Corollary}
\newtheorem{lem}[pr]{Lemma}
\theoremstyle{remark}
\newtheorem{de}[pr]{Definition}
\newtheorem{no}[pr]{Notation}
\newcommand{\Def}[1]{\textbf{\itshape #1}} 
\renewcommand{\phi}{\varphi}
\renewcommand{\epsilon}{\varepsilon}
\newcommand{\Hn}[7]{\mathrm{Hn}\left(\begin{array}{c | r l | }%
#1 & #3\; , & #4 \\ #2 & #5\; ; & #6 \end{array} \;#7\right)}
\newcommand{\co}{c_0}
\newcommand{\carat}{$\wedge$}
\newcommand{\Apery}{Ap\'ery}	
\newcommand{\sr}{square root}
\renewcommand{\th}{\boldsymbol\theta}
\newcommand{\FF}{\mathbf{K}}
\begin{document}
\maketitle
\setcounter{tocdepth}{2}

The \Def{\Apery\ sequence} \cite{apery}~\cite[\texttt{A005259}]{OEIS} is
\begin{equation*}
	A_n = \sum_{k=0}^n \binom{n}{k}^2 \binom{n+k}{k}^2 ,
	\qquad n=0,1,2,\cdots .
\end{equation*}
From the reference (or a CAS\footnote{
\texttt{SumTools[Hypergeometric][Zeilberger](binomial(n,k)\carat2*binomial(n+k,k)\carat2,n,k,En);}})
we find that it satisfies the recurrence
\begin{equation}\label{rec}
	(n+1)^3A_{n+1}
	-(34n^3+51n^2+27n+5)A_n
	+n^3A_{n-1} = 0 ,
\end{equation}\begin{equation*}
	A_0 = 1,\qquad A_1 = 5 .
\end{equation*}
We will show that the sequence $(A_n)$ is a \Def{Stieltjes 
moment sequence}.  In fact:

\begin{thm}\label{THM}
There is $c>0$ and a positive Lebesgue integrable 
function $\phi$ such that
\begin{equation*}
A_n = \int_0^c x^n\, \phi(x)\,dx
\end{equation*}
for $n=0,1,2,\cdots$.
\end{thm}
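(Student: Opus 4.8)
The plan is to realize $F(z):=\sum_{n\ge0}A_nz^{-n-1}$ (convergent for $|z|$ large) as the Cauchy transform of the sought density. By the Nevanlinna/Herglotz representation of functions that map the upper half-plane into the closed lower half-plane and are holomorphic off a compact real interval, it suffices to check: (i) $F$ continues holomorphically to $\C\setminus[0,c]$ for a suitable $c>0$; (ii) $\operatorname{Im}F(z)\le0$ whenever $\operatorname{Im}z>0$; (iii) $F$ has Lebesgue-integrable boundary values on $(0,c)$ and $zF(z)\to A_0=1$ as $z\to\infty$. Granting these, $F(z)=\int_0^c\phi(x)\,dx/(z-x)$ with $\phi(x)=-\tfrac1\pi\operatorname{Im}F(x+i0)\ge0$ (Stieltjes inversion) and $\int_0^c\phi=1$; expanding $1/(z-x)=\sum_n x^nz^{-n-1}$ then yields $A_n=\int_0^c x^n\phi(x)\,dx$ for all $n$, which is the theorem.

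For (i) and (iii): write $y(x)=\sum_{n\ge0}A_nx^n$ and $\theta=x\,d/dx$. Using the operator identity $34n^3+51n^2+27n+5=(2\theta+1)(17\theta^2+17\theta+5)$, the recurrence \eqref{rec} translates into the third-order Fuchsian equation
\begin{equation*}
\bigl[\theta^3-x(2\theta+1)(17\theta^2+17\theta+5)+x^2(\theta+1)^3\bigr]\,y=0,
\end{equation*}
whose coefficient of $d^3/dx^3$ is $x^3(x^2-34x+1)$; the singular points are $0$, $\rho_-$, $\rho_+$, $\infty$, where $\rho_\pm=17\pm12\sqrt2=(\sqrt2\pm1)^4$ and $\rho_-\rho_+=1$. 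The indicial exponents are $0,0,0$ at $0$, then $1,1,1$ at $\infty$, and (one computes) $0,\tfrac12,1$ at each of $\rho_\pm$ — in particular all are $>-1$, and the exponent $\tfrac12$ matches $A_n\sim\mathrm{const}\cdot\rho_+^{\,n}n^{-3/2}$. Since $\C\setminus[\rho_-,\infty)$ is simply connected and contains no singular point other than $0$, where $y$ is the holomorphic Frobenius solution, $y$ extends single-valuedly and holomorphically to $\C\setminus[\rho_-,\infty)$; its radius of convergence is $\rho_-$ because $A_{n+1}/A_n\to\rho_+$. Under $z\mapsto1/z$ this makes $F(z)=z^{-1}y(1/z)$ holomorphic on $\C\setminus[0,c]$ with $c=\rho_+=17+12\sqrt2$, and $zF(z)\to A_0=1$ at $\infty$; transporting the exponent data shows $F$ grows at worst like a power of $\log$ at the endpoints $0,c$ and at the interior singular point $\rho_-=1/\rho_+$ of the cut, so its boundary values on $(0,c)$ are locally integrable. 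Thus (i) and (iii) hold.

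Step (ii) is what I expect to be the main obstacle. Abstractly it says the continued function $w\mapsto w\,y(w)$ maps the upper half-plane into its closure; I would not try to prove this directly but would instead make the continuation explicit. The key structural fact is that the operator above is the symmetric square of a second-order Fuchsian (Heun-type) operator $M$: accordingly $y=g^2$, where $g$ is the holomorphic solution of $M$ (so that $A_n=\sum_{j}b_jb_{n-j}$ for $g(x)=\sum b_nx^n$), and $g$ carries a classical period-integral representation, the branch point of exponent $\tfrac12$ at $\rho_-$ reflecting the conifold degeneration. Feeding that representation into $F=z^{-1}g(1/z)^2$ and simplifying by partial fractions should present $F$ as a visibly nonnegative Cauchy transform on $[0,c]$; equivalently, one computes the jump $\phi(x)=\tfrac1{2\pi i}\bigl(F(x-i0)-F(x+i0)\bigr)$ separately on $(0,\rho_-)$ and $(\rho_-,c)$ from the real local solutions there and checks that it comes out as a square, hence $\ge0$. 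With (ii) in hand the criterion of the first paragraph applies, and the theorem follows with $c=17+12\sqrt2$ and $\phi(x)=-\tfrac1\pi\operatorname{Im}F(x+i0)$.
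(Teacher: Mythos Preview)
Your framework for (i) and (iii) --- the third-order Fuchsian equation, its singularities and exponents, the analytic continuation of $F(z)=z^{-1}y(1/z)$ to $\C\setminus[0,c]$ with $c=(\sqrt2+1)^4$, and the recovery of $A_n$ from the jump across the cut --- is correct and coincides with what the paper does. The symmetric-square observation is also exactly the structural fact the paper exploits: it writes $u_0$, $u_\infty$, and the jump $\phi$ as products of solutions of the underlying second-order (Heun/hypergeometric) equation.

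The gap is in step (ii), and it is not cosmetic. From $y=g^{2}$ you get, on the cut, $u_\infty(x\pm i0)=g_\pm(x)^{2}$ with $g_{-}=\overline{g_{+}}$, hence
\[
\phi(x)=\frac{1}{2\pi i}\bigl(g_{-}^{2}-g_{+}^{2}\bigr)
=-\frac{1}{\pi}\operatorname{Im}\bigl(g_{+}^{2}\bigr)
=-\frac{2}{\pi}\,a(x)\,b(x),
\]
where $a=\operatorname{Re}g_{+}$ and $b=\operatorname{Im}g_{+}$ are two \emph{distinct} real solutions of the second-order equation. So the jump is a \emph{product}, not a square, and its sign is not automatic: you must show that $a$ and $b$ are nonvanishing and of fixed (opposite) sign on each of $(0,\co)$ and $(\co,c)$, and you must pin down the sign of the constant in front. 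Your sentence ``comes out as a square, hence $\ge 0$'' is simply false here.

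This product-positivity is precisely where the paper spends nearly all of its effort. On $(\co,c)$ it writes the two factors as explicit Heun functions and proves, by a delicate induction on the Maclaurin coefficients (with a computer-verified base of some $45$ terms requiring $>100$-digit arithmetic), that each has all positive coefficients and hence is positive on that interval. On $(0,\co)$ it uses van~Hoeij's ${}_2F_1\!\left(\tfrac13,\tfrac23;1;\cdot\right)$ parameterization to realize the two factors as hypergeometric series evaluated at arguments in $(0,1)$, hence positive. Finally, fixing the sign of the constant on $(\co,c)$ requires expanding $u_\infty(c+\delta)$ to order $\sqrt{\delta}$, which the paper reduces to a value of ${}_2F_1$ and its derivative at a specific algebraic point. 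None of this follows from the symmetric-square structure or from a ``period-integral representation'' alone, and your proposal supplies none of it; until those sign checks are carried out, (ii) --- equivalently, $\phi\ge 0$ --- remains unproved.
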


\begin{de}
We say $\phi$ is the \Def{moment density function} 
for $(A_n)$.
\end{de}

\subsection*{Notes}
I have tried to make the argument as
short as possible.  This means many asides and variations
have been removed.

Some of the proofs may be done using 
a computer algebra system (CAS).  I~used Maple 2015.
These are the sort of thing that---until
1980 or later---would have been done by paper-and-pencil 
computation.  I have added some of the
Maple as footnotes.

This result arose from a question asked by Alan Sokal.
It was posted on the MathOverflow discussion board \cite{MOmajer}.
Pietro Majer provided the idea to use the differential equation.

\begin{no}\label{notation:top}
We will use these values.
\begin{align*}
	c &= (\sqrt2+1)^4 = 17 + 12\sqrt{2} \approx 33.9705
	\\
	\co &= (\sqrt2+1)^{-4} = \frac{1}{c} = 
	34-c = 17-12\sqrt{2} \approx 0.0294
\end{align*}
\end{no}

\subsection*{The Differential Equation}
We proceed with a discussion of this\footnote{
\texttt{DE3:=x\carat2*(x\carat2-34*x+1)*%
diff(u(x),x\$3)+3*x*(2*x\carat2-51*x+1)*%
(diff(u(x),x\$2))\\
+(7*x\carat2-112*x+1)*diff(u(x),x)+(x-5)*u(x);}} 
third-order holonomic Fuchsian ODE:
\begin{align*}\label{DE3}
	x^2 (x^2-34x+1) u'''(x)
	+3x(2x^2-51x+1)u''(x)\qquad &
	\\ +
	(7x^2-112x+1)u'(x)+(x-5)u(x) = 0 .&
\tag{DE3}\end{align*}
We consider $x$ a complex variable, and sometimes
consider solutions
in the complex plane.

Differential equation (\ref{DE3}) has four singularities:
$\infty, 0, \co, c$.  They are all regular singular points.
Series solutions exist adjacent to each of them.
From the Frobenius ``series solution'' method\footnote{
\texttt{dsolve(DE3,u(x),series,x=c);}} 
\cite[Ch.~5]{BD}~\cite[Ch.~3]{BR}
we may describe these 
series solutions:
	
\begin{pr}\label{frobenius}
The general solution of {\rm(\ref{DE3})} near the
complex singular point $\infty$
has the form
\begin{align*}
	&A\left(\frac{1}{x} + o(x^{-1})\right)
	+
	B\left(\frac{\log x}{x} + o(x^{-1})\right)
	 +
	C\left(\frac{(\log x)^2}{x} + o(x^{-1})\right)
\end{align*}
as $x \to \infty$, for complex constants $A, B, C$.
The general solution of {\rm(\ref{DE3})} near the
singular point $0$
has the form
\begin{align*}
	A\left(1 + o(1)\right)
	+
	B\left(\log x + o(1)\right)
	+
	C\left((\log x)^2 + o(1)\right)
\end{align*}
as $x \to 0$, for complex constants $A, B, C$.
The general solution of {\rm(\ref{DE3})} near the
singular point $\co$
has the form
\begin{align*}
	&A\left(1-\frac{240+169\sqrt{2}}{48}(x-\co) +O(|x-\co|^2)\right)
	\\ & \qquad +
	B\left((x-\co)^{1/2}+O(|x-\co|^{3/2})\right)
	+
	C\left((x-\co)+O(|x-\co|^2) \right)
\end{align*}
as $x \to \co$, for complex constants $A, B, C$.
The general solution of {\rm(\ref{DE3})} near the
singular point $c$
has the form
\begin{align*}
	&A\left(1 - \frac{240-169\sqrt{2}}{48}(x-c) + O(|x-c|^2)\right)
	\\ &\qquad +
	B\left((x-c)^{1/2}+O(|x-c|^{3/2})\right)
	+
	C\left((x-c) + O(|x-c|^2) \right)
\end{align*}
as $x \to c$, for complex constants $A, B, C$.
\end{pr}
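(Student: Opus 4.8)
The statement is purely local: near each of the four regular singular points of (\ref{DE3}) one must exhibit a fundamental system of solutions of the stated shape, and this is exactly what the Frobenius method delivers once the indicial equation at that point is in hand. So the plan is: (i) for each singular point, translate it to the origin --- at $\infty$ by $x\mapsto 1/x$, at $\co$ and $c$ by $x\mapsto x+\co$ and $x\mapsto x+c$ --- and compute the indicial polynomial; (ii) where the exponents are nonresonant (distinct modulo $\Z$, or a single root of maximal multiplicity) read off the solution forms at once; (iii) where two exponents differ by a positive integer, decide whether a logarithm is genuinely forced. The background is \cite[Ch.~5]{BD}, \cite[Ch.~3]{BR}.

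Step (i) is routine. Substituting $u=\sum_{k\ge 0}a_k x^{r+k}$ into (\ref{DE3}) and collecting the lowest power of $x$ gives indicial polynomial $r^{3}$ at $x=0$. At $\infty$, testing $u=x^{-\rho}$ against the dominant terms as $x\to\infty$ (or the substitution $x=1/t$) gives $(\rho-1)^{3}$, so the exponents there are $1,1,1$ and $u\sim 1/x$. At $\co$ and $c$ --- the two simple roots of $x^{2}-34x+1$ --- the coefficient $x^{2}(x^{2}-34x+1)$ of $u'''$ has a simple zero, and using $\co^{2}=34\co-1$ (resp.\ $c^{2}=34c-1$) one finds that the indicial polynomial is a nonzero multiple of $r(r-\tfrac12)(r-1)$, i.e.\ the exponents are $0,\tfrac12,1$.

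For the two triple points the conclusion is then automatic. Writing $L$ for the operator on the left of (\ref{DE3}), form the parametrized series $w(x,r)=x^{r}\sum_{k\ge 0}a_{k}(r)x^{k}$ with $a_{0}=1$, which is legitimate because the indicial polynomial $P$ satisfies $P(r_{0}+k)\ne 0$ for every $k\ge 1$ when $r_{0}$ is its triple root; then $L[w(x,r)]=P(r)x^{r}$, so $w(\cdot,r_{0})$, $\partial_{r}w(\cdot,r_{0})$ and $\partial_{r}^{2}w(\cdot,r_{0})$ all solve (\ref{DE3}) (since $P(r_{0})=P'(r_{0})=P''(r_{0})=0$), each acquiring one further power of $\log x$ with the same nonzero leading coefficient, and the three are independent. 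With $r_{0}=0$ this is the stated form near $0$, and the analogous construction at $\infty$ (exponents $1,1,1$) gives $A(1/x+o(x^{-1}))+B((\log x)/x+o(x^{-1}))+C((\log x)^{2}/x+o(x^{-1}))$.

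At $\co$ and $c$ the exponents $0,\tfrac12,1$ have only the pair $\{0,1\}$ differing by a positive integer; the exponent-$\tfrac12$ and exponent-$1$ solutions are therefore ordinary Frobenius series, namely $(x-\co)^{1/2}+O(|x-\co|^{3/2})$ and $(x-\co)+O(|x-\co|^{2})$. The exponent-$0$ solution is the delicate case, and settling it is the heart of the proof: its coefficient recursion has vanishing pivot $P(1)=0$ at order $1$, so a term $\propto (x-\co)\log(x-\co)$ --- incompatible with the stated $O(|x-\co|^{2})$ error --- is forced unless the order-$1$ obstruction (the quantity multiplying $a_{0}$ in that recursion) vanishes; one must shift the equation to the point and check, using $\co^{2}=34\co-1$ (resp.\ $c^{2}=34c-1$), that it does, equivalently that the local monodromy around $\co$ and around $c$ carries no logarithmic part beyond the $\sqrt{\,\cdot\,}$ branching. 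Granting this, $a_{1}$ is a free parameter (adjusting it adds a multiple of the exponent-$1$ solution); fixing a normalization and running the recursion for $k\ge 2$, where the pivots $P(k)$ are nonzero, produces the log-free solution $1+a_{1}(x-\co)+O(|x-\co|^{2})$, with the explicit computation giving $a_{1}=-\frac{240+169\sqrt2}{48}$ at $\co$ and $a_{1}=-\frac{240-169\sqrt2}{48}$ at $c$ (the two points being interchanged by $\sqrt2\mapsto-\sqrt2$, which halves the work). This last no-logarithm verification is a finite explicit calculation, feasible by hand or via the Frobenius routine of a CAS, but it is where the real content sits; everything else is bookkeeping with indicial equations.
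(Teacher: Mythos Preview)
Your approach is the paper's approach: the paper offers no argument beyond citing the Frobenius method and a CAS command (\texttt{dsolve(DE3,u(x),series,x=c)}), and you have simply written out what that method does at each singular point, correctly identifying the indicial exponents $0,0,0$ at $0$, $1,1,1$ at $\infty$, and $0,\tfrac12,1$ at $\co$ and $c$. One small simplification: the ``no-logarithm'' check you flag as the delicate step at $\co$ and $c$ is in fact automatic---for the exponent-$0$ series the $a_0$ term is constant, so every derivative term in the order-$1$ recursion already carries a factor $r=0$ and the obstruction vanishes identically; the displayed linear coefficient $-\frac{240\pm169\sqrt2}{48}$ is then just a normalization (since adding a multiple of the exponent-$1$ solution shifts $a_1$), not a forced value.
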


\begin{cor}
If $u(x)$ is any solution of {\rm(\ref{DE3})} on $(0,\co)$ or on
$(\co,c)$, then $u(x)$ has at worst logarithmic singularities.
So $u(x)$ is (absolutely, Legesgue) integrable.
\end{cor}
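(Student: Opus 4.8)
The plan is to localise the problem: a solution of (DE3) can misbehave only at a singular point of the equation, and on each of the two intervals the only relevant singular points are the two endpoints, whose local behaviour is already catalogued in Proposition \ref{frobenius}. First I would note that the leading coefficient of (DE3) is $x^2(x^2-34x+1)$, and since $\co+c=34$ and $\co c=1$ its zeros are exactly $0,\co,c$. Hence on each open interval $(0,\co)$ and $(\co,c)$ equation (DE3) is a linear ODE with analytic coefficients and nonvanishing leading coefficient, so every solution $u$ is (real-)analytic there; in particular $u$ is continuous on the open interval and can have singularities only at the endpoints.

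Next I would bound $u$ near each endpoint. By Proposition \ref{frobenius} every solution near a singular point is a fixed linear combination of the three displayed Frobenius solutions, so it suffices to bound those three. Near $x=0$ the dominant term is the one behaving like $(\log x)^2$, so $u(x)=O\bigl((\log x)^2\bigr)$ as $x\to0^+$. Near $x=\co$ the three basic solutions tend to finite limits (respectively to $1$, $0$, $0$), because the exponents there are $0,\tfrac12,1$ and no logarithm appears; hence $u$ is bounded near $\co$. The identical reasoning at $x=c$ (exponents $0,\tfrac12,1$, no logarithm) shows $u$ is bounded near $c$.

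Then I would conclude integrability by splitting each interval. Fix a small $\epsilon>0$ and write $\int_0^{\co}|u|=\int_0^{\epsilon}|u|+\int_{\epsilon}^{\co-\epsilon}|u|+\int_{\co-\epsilon}^{\co}|u|$. The middle integral is finite since $u$ is continuous on the compact set $[\epsilon,\co-\epsilon]$; the last is finite since $u$ is bounded near $\co$; and the first is finite because $(\log x)^2$ is Lebesgue integrable on $(0,\epsilon]$ (for instance $x^{1/2}(\log x)^2\to0$, so $(\log x)^2\le Cx^{-1/2}$ near $0$, and $x^{-1/2}$ is integrable there). The same three-piece split handles $(\co,c)$, now with bounded behaviour at both endpoints. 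Therefore $u$ is absolutely Lebesgue integrable on each interval.

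The only point requiring care, rather than genuine work, is the endpoint analysis: one must read off from Proposition \ref{frobenius} that the smallest Frobenius exponent occurring at any of $0,\co,c$ is $0$ (so no negative powers arise) and that the only logarithmic terms occur at the origin and only up to the square; granted that, no endpoint can produce a non-integrable singularity, and the word ``logarithmic'' in the statement is justified by the $(\log x)^2$ term at $x=0$.
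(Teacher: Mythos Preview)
Your argument is correct and is exactly the elaboration the paper intends: the corollary is stated without proof, as an immediate consequence of Proposition~\ref{frobenius}, and your write-up simply spells out why the endpoint behaviour listed there (at worst $(\log x)^2$ at $0$, bounded at $\co$ and $c$) forces absolute integrability on each subinterval. There is nothing to add.
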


\begin{no}\label{basicnotation}
Four particular solutions  of (\ref{DE3}) will be named
for use here:

\noindent$\bullet$ Solution 
$u_\infty(x) = {1}/{x} + o(x^{-1})$ as $x \to \infty$,
defined in the complex plane
cut on the real axis interval $[0,c]$.

\noindent $\bullet$ Solution 
$u_0(x) = 1 + o(1)$ as $x \to 0^+$,
defined for $0 < x < \co$.

\noindent$\bullet$ Solution 
$v_0(x) = \log x + o(1)$ as $x \to 0^+$,
defined for $0 < x < \co$.

\noindent$\bullet$ Solution 
$v_2(x) = (c-x)^{1/2}+O(|x-c|^{3/2})$ as $x \to c^-$,
defined for $\co < x < c$.
\end{no}

\begin{figure}[htbp] 
   \centering
   \includegraphics[width=4.8in]{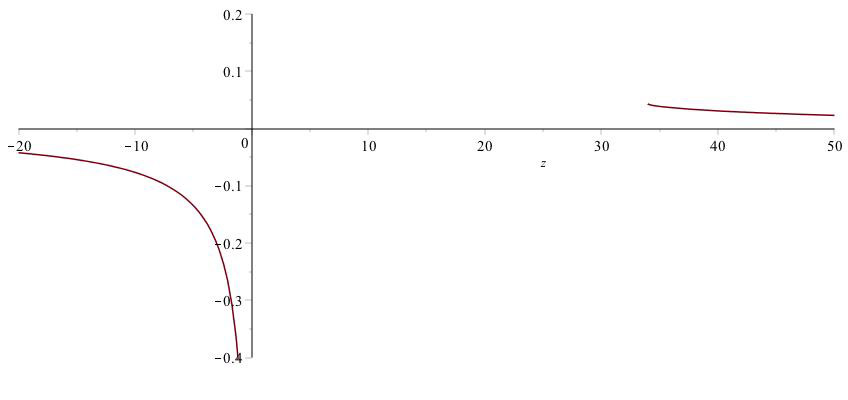} 
   \vskip-0.3in
   \caption{$u_\infty(x)$}
   \label{fig:uinfty}

   \centering
   \includegraphics[width=2.3in]{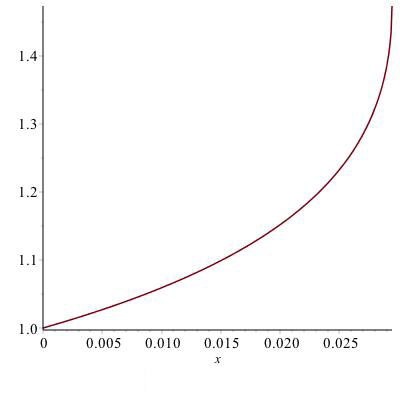} \hfil
   \includegraphics[width=2.3in]{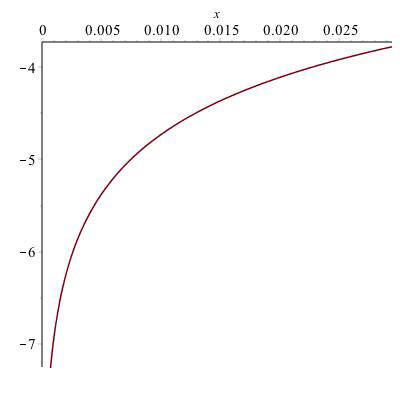} 
   \vskip-0.3in
   \caption{$u_0(x)$ and $v_0(x)$}
   \label{fig:u0v0}

   \centering
   \includegraphics[width=2.3in]{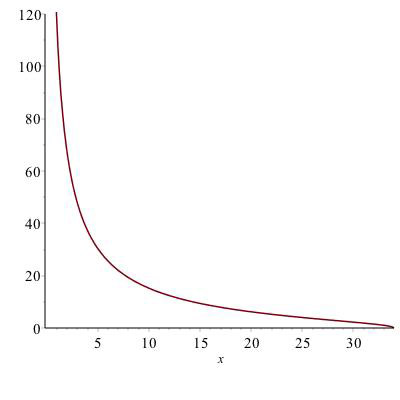} 
   \vskip-0.3in
   \caption{$v_2(x)$}
   \label{fig:v2}
\end{figure}

\begin{pr}\label{AperyGen}
The Maclaurin series for $u_0(x)$ is the generating function
for the \Apery\ sequence:
$$
	u_0(x) = \sum_{n=0}^\infty A_n x^n, 
	\qquad |x| < \co.
$$
\end{pr}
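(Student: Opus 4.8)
The plan is to show that the formal power series $\sum_{n\ge 0} A_n x^n$ satisfies (DE3), and then invoke the uniqueness half of the Frobenius theory (Proposition~\ref{frobenius}) to identify it with $u_0$. First I would treat $F(x) := \sum_{n\ge 0} A_n x^n$ as a formal power series and compute the left-hand side of (DE3) applied to $F$, collecting the coefficient of $x^n$. Because the differential operator in (DE3) has polynomial coefficients of low degree, this coefficient is a fixed linear combination of $A_{n-1}, A_n, A_{n+1}$ with polynomial-in-$n$ coefficients. The key computation is to verify that this linear combination is exactly (a nonzero constant multiple of) the left-hand side of the \Apery\ recurrence~(\ref{rec}); since $(A_n)$ satisfies~(\ref{rec}), every coefficient vanishes, so $F$ is a formal solution of (DE3). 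This is the kind of routine but essential bookkeeping that the paper's footnote suggests can be delegated to a CAS: expand $x^2(x^2-34x+1)F''' + 3x(2x^2-51x+1)F'' + (7x^2-112x+1)F' + (x-5)F$ and read off the recurrence.

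Next I would address convergence. The recurrence~(\ref{rec}) gives $A_{n+1}/A_n \to 34$ (the dominant root of $t^2 - 34t + 1 = 0$, the other being $c_0 = 1/34\cdots$, i.e.\ $\co$), so by the ratio test $F(x)$ has radius of convergence $\co$ and defines a genuine analytic function on $|x| < \co$. Hence $F$ is an honest analytic solution of (DE3) on the disk $|x| < \co$, in particular on the real interval $(0,\co)$.

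Finally I would pin down \emph{which} solution $F$ is. From its defining series, $F(x) = A_0 + A_1 x + \cdots = 1 + 5x + \cdots \to 1$ as $x \to 0^+$, and $F$ is analytic at $0$ — in particular it has no $\log x$ or $(\log x)^2$ component. By Proposition~\ref{frobenius}, the solution space near $0$ is spanned by one solution behaving like $1 + o(1)$ and two others behaving like $\log x + o(1)$ and $(\log x)^2 + o(1)$; a solution that stays bounded (indeed tends to $1$) as $x\to 0^+$ must be exactly the first basis solution, namely $u_0$ as fixed in Notation~\ref{basicnotation}. Therefore $F = u_0$ on $(0,\co)$, and by analytic continuation on all of $|x| < \co$, which is the claim.

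The main obstacle is purely computational: correctly carrying out the series substitution into (DE3) and matching it against~(\ref{rec}) without arithmetic slips, including checking that the leading coefficient in $n$ is the expected nonzero constant so that vanishing of the (DE3)-coefficients is genuinely \emph{equivalent} to the \Apery\ recurrence rather than merely implied by it. Everything after that — the ratio-test radius computation and the Frobenius uniqueness argument — is short and standard.
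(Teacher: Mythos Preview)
Your approach is essentially the paper's: convert the recurrence~(\ref{rec}) into the differential equation (DE3), observe that the generating function is the unique analytic-at-$0$ solution with constant term~$1$, and read off the radius of convergence. The paper compresses all of this into two sentences and a CAS call.

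Two small remarks. First, a slip: the dominant root of $t^2-34t+1=0$ is $c=17+12\sqrt{2}$, not $34$; you clearly intend this since you correctly conclude that the radius is $\co=1/c$. Second, your ratio-test argument tacitly assumes that $A_{n+1}/A_n$ converges, which from the recurrence alone is a Poincar\'e--Perron statement and not entirely free. The paper sidesteps this by arguing the other way: once $F$ is known to satisfy (DE3), the general theory of linear ODEs with analytic coefficients guarantees that any solution analytic at $0$ extends analytically up to the nearest singular point of the equation, namely $\co$, so the radius is at least $\co$; that it is exactly $\co$ then follows because $\co$ is a genuine singularity. This is shorter and avoids the asymptotic-ratio issue altogether.
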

\begin{proof}
This may be checked by your CAS.  The recurrence
(\ref{rec})\footnote{
\texttt{Rec:=(n+1)\carat3*Q(n+1)-(34*n\carat3+%
51*n\carat2+27*n+5)*Q(n)+n\carat3*Q(n-1);}}
converted to a differential equation\footnote{
\texttt{gfun[rectodiffeq](\{Rec,Q(0)=1,Q(1)=5\},Q(n),u(x));}}
yields (\ref{DE3}).  Of course the radius of convergence
extends to the nearest singularity at $\co$.
\end{proof}

\begin{cor}
$u_0(x)>0$ for $0 < x < \co$.
\end{cor}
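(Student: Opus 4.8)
The plan is to combine Proposition~\ref{AperyGen} with the elementary positivity of the \Apery\ numbers themselves. First I would observe that in the defining sum
\begin{equation*}
	A_n = \sum_{k=0}^n \binom{n}{k}^2\binom{n+k}{k}^2
\end{equation*}
every summand is a nonnegative integer, and the $k=0$ summand equals $\binom{n}{0}^2\binom{n}{0}^2 = 1$; hence $A_n \ge 1 > 0$ for every $n = 0,1,2,\dots$.

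Next I would invoke Proposition~\ref{AperyGen}, which identifies $u_0$ on the disk $|x| < \co$ (in particular on the interval $0 < x < \co$) with the convergent power series $\sum_{n=0}^\infty A_n x^n$. For $0 < x < \co$ every term $A_n x^n$ is then $\ge 0$, and the $n=0$ term is $A_0 = 1$, so the sum satisfies $u_0(x) \ge 1 > 0$. That is the assertion.

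There is essentially no obstacle here: the statement is an immediate consequence of the two facts above. The only point that genuinely needs the earlier work is the identification of the analytic solution $u_0$ with the generating series as an actual equality of functions on $(0,\co)$ — not merely as formal power series — and that is precisely what Proposition~\ref{AperyGen} supplies, since the radius of convergence reaches the nearest singularity at $\co$.
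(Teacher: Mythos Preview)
Your argument is correct and is exactly the intended one: the paper states this corollary immediately after Proposition~\ref{AperyGen} with no proof, the point being precisely that all Maclaurin coefficients $A_n$ are positive so the series is positive on $(0,\co)$. Your write-up just makes this explicit.
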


Determining the signs of $v_0$ and $v_2$ will be
more difficult.

\begin{pr}
The Laurent coefficients for $u_\infty(z)$ are
the \Apery\ numbers:
$$
	u_\infty(z) = \sum_{n=0}^\infty \frac{A_n}{z^{n+1}},
	\qquad |z| > c.
$$
\end{pr}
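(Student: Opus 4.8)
The plan is to exploit an involutive symmetry of the differential equation (DE3). First I would observe that its singular set $\{\,0,\co,c,\infty\,\}$ is invariant under the Möbius map $x\mapsto 1/x$, since $\co=1/c$ (Notation \ref{notation:top}), and that this map interchanges the Frobenius data at $0$ and at $\infty$ (exponents $0,0,0$ versus $1,1,1$, as read off from Proposition \ref{frobenius}) while preserving those at $\co$ and at $c$ (exponents $0,\tfrac12,1$ in both cases). This makes it plausible — and a direct substitution, or a one-line CAS check, confirms it — that the linear map $u(x)\mapsto \tilde u(x):=\tfrac1x\,u(1/x)$ carries solutions of (DE3) to solutions of (DE3); note this map is an involution.

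Next I would apply this to $u_0$. By Proposition \ref{AperyGen} we have $u_0(x)=\sum_{n\ge 0}A_n x^n$ for $|x|<\co$, so the function $w(z):=\tfrac1z\,u_0(1/z)=\sum_{n\ge 0}A_n z^{-n-1}$ is holomorphic on $|z|>c$ and, by the symmetry just noted, solves (DE3) there. As $z\to\infty$ it has the asymptotics $w(z)=\tfrac1z\bigl(1+o(1)\bigr)=\tfrac1z+o(z^{-1})$.

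Then I would identify $w$ with $u_\infty$. By Proposition \ref{frobenius}, every solution of (DE3) near $\infty$ has the form $A\bigl(\tfrac1x+o(x^{-1})\bigr)+B\bigl(\tfrac{\log x}{x}+o(x^{-1})\bigr)+C\bigl(\tfrac{(\log x)^2}{x}+o(x^{-1})\bigr)$; since $\tfrac{(\log x)^2}{x}\gg\tfrac{\log x}{x}\gg\tfrac1x$ as $x\to\infty$, a solution whose behaviour is $\tfrac1x+o(x^{-1})$ is forced to have $C=0$, then $B=0$, then $A=1$. Both $u_\infty$ (by Notation \ref{basicnotation}) and $w$ have exactly this asymptotic behaviour, hence $w=u_\infty$ on a neighbourhood of $\infty$. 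Since $\{\,|z|>c\,\}$ is connected and both functions are holomorphic on it ($u_\infty$ on the whole cut plane, which contains this region; $w$ through its series), the identity theorem gives $u_\infty(z)=w(z)=\sum_{n\ge 0}A_n z^{-n-1}$ for all $|z|>c$, as claimed.

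The main obstacle is the very first step: verifying that $x\mapsto 1/x$ is genuinely a symmetry of (DE3), i.e.\ that $\tfrac1x\,u(1/x)$ solves (DE3) whenever $u$ does. The coefficient polynomials of (DE3) are not individually palindromic, so this is a short but not-entirely-transparent polynomial identity arising from the interplay of the three derivative terms under the change of variable $\tfrac{d}{dx}=-t^2\tfrac{d}{dt}$ with $t=1/x$; I would carry it out by CAS, consistently with the rest of the paper, or spell out the substitution by hand. Everything after that is soft — asymptotic bookkeeping together with the identity theorem.
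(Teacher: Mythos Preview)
Your proposal is correct and follows essentially the same route as the paper: verify that $u(x)\mapsto \tfrac1x\,u(1/x)$ sends solutions of (DE3) to solutions, apply this to $u_0$, and match the asymptotics at $\infty$ to identify the result with $u_\infty$. The paper compresses the boundary-matching and uniqueness into a single phrase (``Matching the boundary conditions''), whereas you spell out the Frobenius uniqueness argument and invoke the identity theorem explicitly; but there is no substantive difference in strategy.
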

\begin{proof}
Check that if $u(x)$ is a solution
of (\ref{DE3}), then $w(z) =  u(1/z)/z$ is also a solution
of (\ref{DE3}).  Matching the boundary conditions, we
get
$$
	u_\infty(z) = \frac{1}{z} u_0\left(\frac{1}{z} \right) .
$$
Apply Prop.~\ref{AperyGen}.
\end{proof}

Note: In general, for other similar sequences that can be handled in
this same way: 

(a)~the generating function for the sequence, and

(b)~the moment density function
for the sequence

\noindent
satisfy \emph{different}
differential equations.

\subsection*{The Function  $\phi$}
Series solution $u_\infty$ of
(\ref{DE3}) is meromorphic  and single-valued
near $\infty$.   It continues
analytically to the complex plane with a cut on the
interval $[0,c]$ of the real axis.  We will still
use the notation
$u_\infty$ for that continuation.
Since the Laurent coefficients are all real, we have
\begin{equation}\label{symm}
	u_\infty\big(\,\overline{z}\,\big) = 
	\overline{u_\infty(z)}
\end{equation}
near $\infty$, and therefore on the whole domain.
In particular,
$u_\infty(z)$ is real for $z$ on the real axis (except the
cut, of course).  Define upper and lower values
on the cut $0 < x < c$:
$$
	u_\infty(x+i0) = \lim_{\delta \to 0+}
	u_\infty(x+i\delta),\quad
	u_\infty(x-i0) = \lim_{\delta \to 0+}
	u_\infty(x-i\delta).
$$
Then from (\ref{symm}) we have
\begin{equation}\label{cutsymm}
	u_\infty(x-i0) = \overline{u_\infty(x+i0)},
	\qquad 0<x<c.
\end{equation}

\begin{no}
$$
	\phi(x) = 
	\frac{1}{2\pi i}\big(u_\infty(x-i0)-u_\infty(x+i0)\big) .
$$
\end{no}
Function $u_\infty$ in the upper half plane
extends analytically to a solution
in a neighborhood of $(0,\co)$, and similarly
$u_\infty$ in the lower half plane.
Thus $\phi(x)$ restricted to $(0,\co)$ is a
solution of (\ref{DE3}),
since it is a linear combination of solutions.
In the same way, $\phi(x)$ restricted to $(\co,c)$ is a
solution of (\ref{DE3}).

See Figure~\ref{fig:Uall}; an enlargement shows the behavior
near the singular point $\co$.  We will see that $\phi$
has \sr\ asymptotics near the right endpoint $c$
(Prop.~\ref{alphav2}) and
logarithmic asymptotics near the left endpoint $0$
(Prop.~\ref{alphav0}).

\begin{figure}[htbp] 
   \centering
   \includegraphics[width=2.3in]{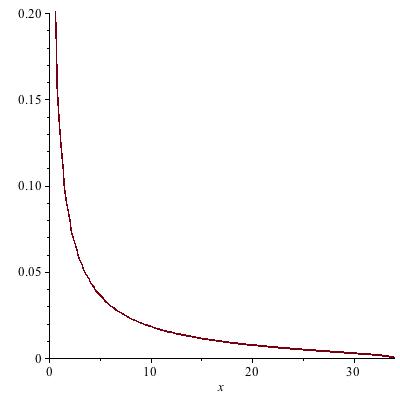} \hfil
   \includegraphics[width=2.3in]{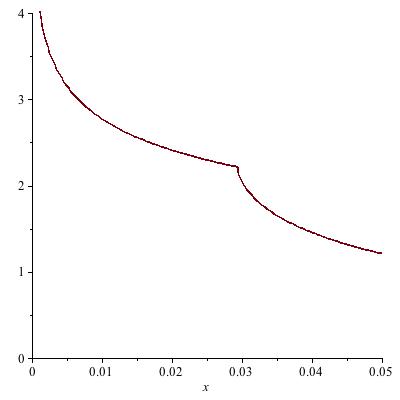} 
   \vskip-0.2in
   \caption{Moment density function $\phi(x)$}
   \label{fig:Uall}
\end{figure}

\begin{pr} The \Apery\ numbers satisfy
$A_k = \int_0^c x^k\,\phi(x)\,dx$,
$k = 0, 1, 2, \cdots$.
\end{pr}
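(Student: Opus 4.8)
The plan is to recover the \Apery\ numbers as moments of $\phi$ by a contour-integration argument built on the Laurent expansion $u_\infty(z) = \sum_{n\ge 0} A_n z^{-n-1}$ valid for $|z|>c$. First I would fix $n\ge 0$ and consider $\frac{1}{2\pi i}\oint_{|z|=R} z^n\,u_\infty(z)\,dz$ for $R>c$. Since $u_\infty$ is analytic outside $[0,c]$ and decays like $1/z$ at infinity, term-by-term integration of the Laurent series picks out exactly the coefficient of $z^{-1}$ in $z^n u_\infty(z)$, which is $A_n$. So $A_n = \frac{1}{2\pi i}\oint_{|z|=R} z^n\,u_\infty(z)\,dz$.

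Next I would collapse the large circle onto the cut $[0,c]$. Because $u_\infty$ is analytic in $\mathbb{C}\setminus[0,c]$ and $z^n u_\infty(z) = z^{n-1} + o(z^{n-1})$ at infinity has no residue at $\infty$ beyond what the cut contributes, deforming $|z|=R$ down to a contour hugging the segment $[0,c]$ (a ``dog-bone'' around the cut) is justified by Cauchy's theorem, provided the small circular pieces around the endpoints $0$ and $c$ contribute nothing in the limit. The endpoint at $0$ is harmless because, by Proposition~\ref{frobenius} and its corollary, every solution has at worst a logarithmic singularity there, so $z^n u_\infty(z)$ times the arc length $O(\epsilon)$ tends to $0$; the endpoint at $c$ is likewise harmless since the local behavior is bounded plus a $(x-c)^{1/2}$ term, again killed by the $O(\epsilon)$ arc length. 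Collapsing the dog-bone gives
\begin{equation*}
A_n = \frac{1}{2\pi i}\int_0^c x^n\big(u_\infty(x-i0) - u_\infty(x+i0)\big)\,dx = \int_0^c x^n\,\phi(x)\,dx,
\end{equation*}
using the definition of $\phi$ and orienting the two edges of the cut correctly.

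The main obstacle is making the contour deformation rigorous near the two endpoints and checking that the integral over the cut actually converges (i.e.\ that $\phi$ is integrable, not merely that it is defined a.e.). Integrability follows from the corollary to Proposition~\ref{frobenius}: on $(0,\co)$ and on $(\co,c)$ the continued function $u_\infty(x\pm i0)$ is a solution of (DE3) with at worst logarithmic singularities at the endpoints and at $\co$, hence Lebesgue integrable, so $\phi$ is too. One should also confirm that $u_\infty(x\pm i0)$ genuinely exist as boundary values and that the deformation does not cross $\co$ in a way that invalidates Cauchy's theorem — but $\co$ is an interior point of the cut, not a branch point of $u_\infty$ as continued from outside, so this is not an issue; the only subtlety is bookkeeping the orientation so the difference $u_\infty(x-i0)-u_\infty(x+i0)$ appears with the sign that matches the definition of $\phi$. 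Once the endpoint arcs are dispatched, the identity $A_n = \int_0^c x^n\phi(x)\,dx$ drops out for every $n\ge 0$, which (together with positivity of $\phi$, established later) is exactly Theorem~\ref{THM}.
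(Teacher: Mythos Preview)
Your proposal is correct and follows essentially the same route as the paper's proof: compute $\frac{1}{2\pi i}\oint z^k u_\infty(z)\,dz$ two ways, once via the Laurent expansion at $\infty$ (equivalently, the residue at $\infty$) to get $A_k$, and once by collapsing onto the dog-bone contour around $[0,c]$ to get $\int_0^c x^k\phi(x)\,dx$, with the endpoint arcs vanishing because the singularities are at worst logarithmic. The paper phrases the first step as ``residue at $\infty$'' and takes the dog-bone $\Gamma_\delta$ as the starting contour rather than deforming from a large circle, but this is only a cosmetic difference; your extra remarks about integrability of $\phi$ and the role of $\co$ are accurate and match what the paper assumes implicitly.
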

\noindent\emph{Proof.}
Fix a nonnegative integer $k$.  For $\delta>0$, let
$\Gamma_\delta$ be the contour in the complex plane at distance
$\delta$ from $[0,c]$, as in Figure \ref{fig:contour}.
(Two line segments and two semicircles; traced
counterclockwise.)
Now $u_\infty$ has at worst logarithmic singularities, so
we have this limit:
$$
	\lim_{\delta\to 0+} \oint_{\Gamma_\delta}
	z^k u_\infty(z)\, dz
	=\int_0^c x^k \big(u_\infty(x-i0)-u_\infty(x+i0)\big) dx .
$$
On the other hand, $z^k u_\infty(z)$ is analytic on and outside
the contour $\Gamma_\delta$, except at $\infty$ where it has
an isolated singularity with residue
$A_k$.  Therefore
$$
	\oint_{\Gamma_\delta} z^k u_\infty(z)\, dz 
	=2\pi i A_k .
$$
Thus
$$
	A_k = \int_0^c
	 \frac{x^k}{2\pi i}
	\big(u_\infty(x-i0)-u_\infty(x+i0)\big)\,dx .\qquad\qed
$$
\begin{figure}[htbp] 
   \centering
   \includegraphics[width=3in]{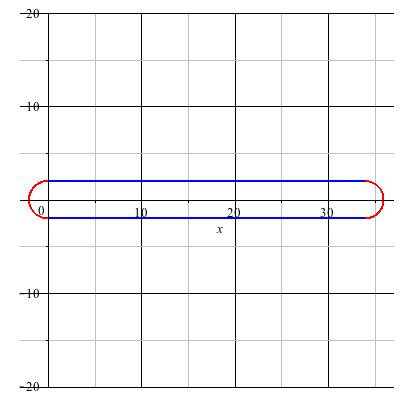} 
   \vskip-0.2in
   \caption{Contour $\Gamma_\delta$}
   \label{fig:contour}
\end{figure}

\emph{What remains to be proved}: $\phi$ is nonnegative
on $(0,c)$ (Cor.~\ref{phipos}).  From (\ref{cutsymm}) we know
that $\phi(x)$ is real on $(0,c)$.

\subsection*{Heun General Functions}
Some of the basic solutions in Notation~\ref{basicnotation}
may be represented in terms of Heun functions.  The Heun functions are described in
\cite{heun, DLMFheun, parametric}.  

\begin{de}
Let complex parameters $a,q,\alpha, \beta, \gamma, 
\delta, \epsilon$ be given satisfying
$a \ne 0$,
$\alpha+\beta+1=\gamma+\delta+\epsilon$,
$\delta \ne 0$, and
$\gamma \ne 0, -1, -2, \cdots$.
Define\footnote{
\texttt{HeunG(a,q,alpha,beta,gamma,delta,z)}}
the \Def{Heun general function}
\begin{equation}\label{heunseries}
	\Hn{a}{q}{\alpha}{\beta}{\gamma}{\delta}{z} 
	= \sum_{n=0}^\infty p_n z^n,
\end{equation}
where the Maclaurin coefficients satisfy initial conditions
\begin{equation*}
	p_0=1, \qquad p_1 = \frac{q}{a\gamma},
\end{equation*}
and recurrence
\begin{equation*}
	R_n p_{n+1} -(q+Q_n)p_n+P_n p_{n-1} = 0,
\end{equation*}
with
\begin{align*}
	R_n &=a(n+1)(n+\gamma) ,
	\\
	Q_n &= n\big((n-1+\gamma)(1+a)+a\delta+\epsilon\big) ,
	\\ 
	P_n &= (n-1+\alpha)(n-1+\beta) .
\end{align*}
\end{de}

This function satisfies the \Def{Heun general differential equation}
\begin{align*}
	w''(z)
	+\left(\frac{\gamma}{z}+\frac{\delta}{z-1}
	+\frac{\epsilon}{z-a}\right)w'(z)
	+\frac{\alpha\beta z - q}{z(z-1)(z-a)}\;w(z) = 0 .
\end{align*}
(Consult the references, or use your CAS to go from
the recurrence to the differential equation.)
This DE has singularities at $\infty, 0, 1, a$;
all regular singular points.  Convergence
of the series extends to the nearest singularity, so
the radius of convergence in (\ref{heunseries}) is
$\min\{1,|a|\}$.

\begin{pr} \label{basicheun}
Within the radius of convergence:
\begin{align*}
	u_0(x) &= \Hn{a_2}{q_4}{
	1/2}{1/2}{1}{1/2}{cx}^2
	\\
	v_2(x) &= \frac{(x-\co)(c-x)^{1/2}}{c-\co}
	\;\Hn{a_1}{q_1}{
	3/2}{3/2}{3/2}{1}{1-\co x} \cdot
	\\ &\qquad\qquad\qquad \cdot
	\;\Hn{a_1}{q_2}{
	1}{ 1}{ 1/2}{ 1}{ 1-\co x}
	\\
	u_\infty(z) &= \frac{1}{z} 
	\Hn{a_2}{q_4}{1/2}{1/2}{1}{1/2}{\frac{c}{z}}^2 ,
\end{align*}
where
\begin{align*}
	a_1 &= 1-\co^2 = -576+408\sqrt{2} \approx 0.9991
	\\
	a_2 &= c^2 =577+408\sqrt{2} \approx 1153.9991
	\\
	q_1 &= - \frac{1317}{4}+234\sqrt{2} \approx 1.676
	\\
	q_2 &= (\sqrt2+1)^{-1}(1+\co)
	= - 42+30\sqrt{2} \approx 0.4264
	\\
	q_4 &= \frac{5c}{2} = \frac{85}{2}+30\sqrt{2} \approx 84.93
\end{align*}
\end{pr}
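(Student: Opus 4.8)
The plan is to treat all three identities by one device: after the change of independent variable indicated on each line, equation (DE3) is the symmetric square of a second-order Fuchsian equation of Heun type. Consequently squares and products of solutions of that Heun equation are solutions of (DE3), and one then determines \emph{which} solution of (DE3) a given right-hand side is by inspecting its leading term at one singular point and invoking the uniqueness supplied by Proposition~\ref{frobenius}.

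First I would record the structural fact. The indicial equations of (DE3) at $\infty,0,\co,c$ have exponent sets $\{1,1,1\}$, $\{0,0,0\}$, $\{0,\tfrac12,1\}$, $\{0,\tfrac12,1\}$, which are precisely the symmetric squares $\{2r_1,\,r_1+r_2,\,2r_2\}$ of the pairs $\{\tfrac12,\tfrac12\}$, $\{0,0\}$, $\{0,\tfrac12\}$, $\{0,\tfrac12\}$; these four pairs sum to $2$, the Fuchs total for a second-order equation with four regular singular points. So there is a second-order Fuchsian equation $L$ with singular points $\infty,0,\co,c$ and those exponents, with $\mathrm{Sym}^2 L=$ (DE3) up to an overall algebraic factor. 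In the variable $z=cx$, $L$ is the Heun equation with $a=a_2=c^2$ and $(\alpha,\beta,\gamma,\delta,\epsilon)=(\tfrac12,\tfrac12,1,\tfrac12,\tfrac12)$, whose singular points $0,1,a_2,\infty$ are the images of $0,\co,c,\infty$; in the variable $t=1-\co x$, which sends $c,0,\co,\infty$ to $0,1,a_1,\infty$, the same $L$ is the Heun equation with $a=a_1$ and $(\alpha,\beta,\gamma,\delta,\epsilon)=(\tfrac12,\tfrac12,\tfrac12,1,\tfrac12)$. The accessory parameters $q$ are fixed by one Taylor coefficient; for instance $p_1=q_4/(a_2\gamma)=5/(2c)$ gives $\Hn{a_2}{q_4}{1/2}{1/2}{1}{1/2}{cx}=1+\tfrac52 x+\cdots$, whose square is $1+5x+\cdots=A_0+A_1x+\cdots$. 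That $\mathrm{Sym}^2$ of these Heun equations really is (DE3), and that the accessory parameters come out to $q_1,q_2,q_4$, is a finite symbolic check: writing a Heun solution as $h$ with $h''=r h'+s h$, the functions $h^2$, $(h^2)'$, $(h^2)''$, $(h^2)'''$ all lie in the three-dimensional span of $\{h^2,hh',(h')^2\}$, which yields a third-order equation one compares termwise with (DE3). I would run this on the CAS, as is done elsewhere in the paper.

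Granting that, the identifications are short. For $u_0$: the right-hand side is a solution of (DE3) that is holomorphic at $0$ (the series converges for $|cx|<\min\{1,|a_2|\}=1$, i.e. on $|x|<\co$) with value $1$ there; since (DE3) has exponents $0,0,0$ at $0$, its solutions holomorphic at $0$ form a one-dimensional space, which by Proposition~\ref{AperyGen} is spanned by $u_0=\sum A_nx^n$, so the two power series coincide. For $u_\infty$: this follows from the $u_0$ identity together with the already-observed fact that $u\mapsto z^{-1}u(1/z)$ carries solutions of (DE3) to solutions, since $u_\infty(z)=z^{-1}u_0(1/z)$ and $|c/z|<1$ exactly when $|z|>c$. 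For $v_2$: among the solutions of $\mathrm{Sym}^2 L$, $v_2$ is the product of the two local solutions of $L$ at $x=c$ — the exponents of $L$ there are $0$ and $\tfrac12$, so their product has exponent $\tfrac12$ and carries no logarithm, and the solution of (DE3) with exponent $\tfrac12$ at $c$ is unique up to scale, namely $v_2$. In the variable $t$ those two local solutions are the Heun function of $L$ and $t^{1/2}$ times the Heun function of its companion (the one with $\gamma$ replaced by $2-\gamma$); a homotopic transformation by $(1-t/a_1)^{1/2}$, which equals a constant times $(x-\co)^{1/2}$ since $1-t/a_1=(x-\co)/(c-\co)$, turns each of these into the corresponding Heun function in the stated formula, the two extra factors $(1-t/a_1)^{1/2}$ combining with $t^{1/2}=\co^{1/2}(c-x)^{1/2}$ to rebuild the prefactor $(x-\co)(c-x)^{1/2}/(c-\co)$ up to a constant. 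The normalization is then pinned down by the leading term: as $x\to c^-$ both Heun factors tend to $1$ and the prefactor tends to $(c-x)^{1/2}$, matching $v_2(x)=(c-x)^{1/2}+\cdots$. Convergence is adequate on $(\co,c)$, where $t=1-\co x$ ranges over $(0,a_1)$, inside both Heun disks of radius $\min\{1,|a_1|\}=a_1$.

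The part I expect to be the real work is the $v_2$ line: getting the homotopic transformation — equivalently, the split of the prefactor $(x-\co)(c-x)^{1/2}/(c-\co)$ — exactly right, so that $L$'s two local solutions at $c$ become precisely the two Heun functions written there and the accessory parameters come out to $q_1$ and $q_2$, and checking that the constant it produces, together with $t^{1/2}=\co^{1/2}(c-x)^{1/2}$, reconstitutes exactly $1/(c-\co)$ with no stray factor. The remaining ingredients — the symbolic verification that $\mathrm{Sym}^2$ of the relevant Heun equations equals (DE3), and that $q=q_1,q_2,q_4$ — are routine and best delegated to the CAS.
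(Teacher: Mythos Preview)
Your approach is correct and is genuinely different from the paper's. The paper's proof is a bare two-line CAS verification: plug each right-hand side into (DE3), confirm it is a solution, and check that the leading term at the relevant singular point matches the defining asymptotics from Notation~\ref{basicnotation}. There is no attempt to explain \emph{why} squares and products of Heun functions should appear.

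Your argument supplies that explanation: (DE3) is the symmetric square of a second-order Heun equation, so any product of two solutions of that Heun equation solves (DE3). This structural fact is what actually underlies not only Proposition~\ref{basicheun} but also the hypergeometric identities of Proposition~22 later in the paper. Your identification of $u_0$ and $u_\infty$ via one-dimensionality of the holomorphic solution space at $0$ (resp.\ $\infty$) is clean and complete. For $v_2$, your outline is right: in the variable $t=1-\co x$, the two local solutions of $L$ at $t=0$ have exponents $0$ and $\tfrac12$, their product is the exponent-$\tfrac12$ solution of $\mathrm{Sym}^2 L=\text{(DE3)}$, and the $(1-t/a_1)^{1/2}$ gauge taking $L$ (with $\epsilon=\tfrac12$) to the $\epsilon=\tfrac32$ Heun equations satisfied by the two displayed factors indeed leaves behind exactly the prefactor $(1-t/a_1)\,t^{1/2}=\dfrac{(x-\co)(c-x)^{1/2}}{c-\co}\cdot\co^{1/2}$, with the residual constant absorbed by the normalization of $v_2$ at $x=c$. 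Your wording of which way the $(1-t/a_1)^{1/2}$ factor moves is slightly loose---one has to be careful that the stated Heun factors, having $\epsilon=\tfrac32$, are $(1-t/a_1)^{-1/2}$ times solutions of $L$, so that the product of the $L$-solutions is $(1-t/a_1)^{+1}$ times $H_1H_2$---but the end result you assert is correct, and you rightly flag this bookkeeping (and the accessory parameters $q_1,q_2,q_4$) as the part to hand to the CAS.

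In short: the paper verifies; you explain and then verify. Both land in the same place, but your route makes the shape of the formulas inevitable rather than miraculous.
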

\begin{proof}In each case verify
that it satisfies the differential equation\footnote{
\texttt{subs(u(x)=v2,DE3): simplify(\%);}} and
boundary properties\footnote{
\texttt{MultiSeries[series](v2,x=c,2);}} that specify the solution.
\end{proof}

\subsection*{All Coefficients Positive}
In some cases we can determine that all Maclaurin
coefficients of
a Heun general function
$$
	\Hn{a}{q}{\alpha}{\beta}{\gamma}{\delta}{z} 
$$
are positive.  When that is true, then
in particular this function will be positive and increasing
and convex on $(0,R)$ where $R = \min\{1,|a|\}$ is the radius
of convergence.

\begin{lem}\label{L2}
All Maclaurin coefficients are positive in
$$
	\Hn{a_1}{q_1}{3/2}{3/2}{3/2}{1}{z} ,
$$
where $a_1 = -576+408\sqrt{2}$ and
$q_1 = -\frac{1317}{4}+234\sqrt{2}$.
\end{lem}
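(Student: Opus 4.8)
The plan is to argue directly from the three‑term recurrence for the Maclaurin coefficients $p_n$ of the function in the lemma. With its parameters one has $\epsilon=\alpha+\beta+1-\gamma-\delta=3/2$, so the recurrence coefficients specialize to $R_n=a_1(n+1)(n+\tfrac32)$, $P_n=(n+\tfrac12)^2$, and $b_n:=q_1+Q_n=q_1+n\big((n+\tfrac12)(1+a_1)+a_1+\tfrac32\big)$. Because $a_1=-576+408\sqrt2>0$, every $R_n$ and every $P_n$ is positive, and $b_n>0$ for all $n\ge0$ (since $q_1>0$ and $Q_n\ge0$); moreover $p_0=1$ and $p_1=q_1/(a_1\gamma)=2q_1/(3a_1)>0$. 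Thus $p_{n+1}=(b_np_n-P_np_{n-1})/R_n$, and the only threat to positivity is the subtracted term. Setting $r_n=p_n/p_{n-1}$ for $n\ge1$, the recurrence becomes $r_{n+1}=(b_n-P_n/r_n)/R_n$, which for $r_n>0$ is an \emph{increasing} function of $r_n$; hence a lower bound on the ratio propagates forward, and it suffices to produce an explicit sequence $\rho_n>0$ with $\rho_1\le r_1$ and
\[
   \rho_{n+1}R_n\ \le\ b_n-\frac{P_n}{\rho_n}\qquad(n\ge1).
\]
Then $r_n\ge\rho_n>0$ for all $n$ by induction, and $p_n=p_{n-1}r_n>0$ follows from $p_0>0$.

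Before calibrating $\rho_n$ I would establish the enabling inequality $b_n^2>4P_nR_{n-1}$, the condition under which the companion Riccati iteration $s_n:=R_{n-1}r_n$, $s_{n+1}=b_n-P_nR_{n-1}/s_n$, has real ``fixed points'' and cannot oscillate. Putting $m=n+\tfrac12$ one computes $b_n=(1+a_1)m^2+(1+\tfrac{a_1}{2})m+(q_1-\tfrac{a_1}{2}-\tfrac34)$ and $4P_nR_{n-1}=4a_1m^4-2a_1m^3$, so $b_n^2-4P_nR_{n-1}$ is a quartic in $m$ whose leading coefficient is $(1-a_1)^2>0$, whose constant term is the square $(q_1-\tfrac{a_1}{2}-\tfrac34)^2$, and whose intermediate coefficients are manifestly positive (using $q_1-\tfrac{a_1}{2}-\tfrac34>0$); hence $b_n^2>4P_nR_{n-1}$ for all $n\ge1$. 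It is worth noticing how slender this margin is: the $m^4$‑coefficient $(1-a_1)^2=(577-408\sqrt2)^2\approx7.5\times10^{-7}$ is the sole obstruction, positive only because $a_1\ne1$.

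The remaining, and principal, step is to pin down $\rho_n$ and verify the displayed inductive inequality, and here the calibration is delicate. A short computation gives $b_n-P_n-R_n=(1-a_1)n-\epsilon_0$ with $\epsilon_0=\tfrac14+\tfrac32a_1-q_1=378\sqrt2-\tfrac{1069}{2}\approx0.073>0$, so $b_n-P_n-R_n$ is \emph{negative} for $n\le83$ and only becomes positive for $n\ge84$. Consequently the naive guess $\rho_n\equiv1$ fails the inductive inequality for small $n$ (indeed one checks $r_6<1$), and $\rho_n$ must be allowed to dip below $1$ — by an amount of order $1/n$ — over a long initial range before recovering, mirroring the true ratios, whose asymptotics are governed by the two nearly coincident singular points $z=a_1$ (exponent difference $\tfrac12$) and $z=1$ (logarithmic) of the Heun equation. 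I would therefore take $\rho_n=1-A/(n+B)$, or a quotient of two quadratics in $n$ if more freedom is needed, with $A$ a small constant dominating this defect and $B$ chosen so that simultaneously $0<\rho_1\le r_1=2q_1/(3a_1)$ and, after clearing denominators, the inductive inequality reduces to a polynomial inequality $\Pi(n)\ge0$ holding for every $n\ge1$. The hard part is exactly this: since $R_n$, $P_n$, and $b_n$ all agree to leading order and the slack in $\Pi$ is itself only of size $1-a_1\approx10^{-3}$, generic $A,B$ do not work; one must locate admissible values and then certify $\Pi(n)\ge0$ — either by displaying $\Pi$ (after a shift $n\mapsto n+n_0$) with all but finitely many coefficients positive and checking the remaining small values, or by verifying $p_0,\dots,p_N$ positive directly with a CAS and running the $\rho_n$‑induction only for $n\ge N$. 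I expect the write‑up to do the former. Once $r_n\ge\rho_n>0$ is secured, positivity of every $p_n$ is immediate.
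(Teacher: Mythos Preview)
Your approach is essentially the paper's: pass to the ratios $r_n=p_n/p_{n-1}$, use that the Riccati map $r\mapsto (q_1+Q_n)/R_n-P_n/(R_n r)$ is increasing, and trap $r_n$ by a sequence of the form $1-O(1/n)$. What you leave unexecuted is precisely the calibration you flag as ``the hard part,'' and your proposal stops short of a proof there. The paper settles it concretely: it takes the \emph{two-sided} bound $1-\tfrac{1}{10n}<r_n<\tfrac{1}{a_1}$ and shows (by a CAS-verified polynomial inequality in $n$) that for $n\ge45$ this interval is carried into $\bigl(1-\tfrac{1}{10(n+1)},\,\tfrac{1}{a_1}\bigr)$; then $p_0,\dots,p_{45}>0$ and the membership $r_{45}\in\bigl(1-\tfrac{1}{450},\tfrac{1}{a_1}\bigr)$ are verified directly. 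So the paper takes your ``latter'' alternative (direct check up to $N$, then induction), not the pure polynomial-certificate ``former'' you predicted.

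Your discriminant inequality $b_n^2>4P_nR_{n-1}$ and your diagnosis of the sign change in $b_n-P_n-R_n$ near $n=84$ are correct and illuminating, but neither is invoked in the paper; likewise your omission of an upper bound is harmless for positivity, though the paper carries one. One practical point the paper stresses and you do not: the check at $n=45$ involves integers with more than $100$ digits, so exact arithmetic is essential---a $20$-digit floating-point computation falsely reports negative coefficients.
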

\begin{proof}
Let $p_n$ be the Maclaurin coefficients.  Then
$$
	R_n p_{n+1} - (q_1+Q_n) p_n + P_n p_{n-1} = 0,
$$
with
\begin{align*}
	R_n &= \textstyle a_1 (n+1)(n+\frac{3}{2})
	\\
	Q_n &= \textstyle
	n \big((n+\frac{1}{2})(1+a_1) + a_1 + \frac{3}{2}\big)
	\\
	P_n &= \textstyle(n+\frac{1}{2})^2.
\end{align*}
Write $r_n = p_{n}/p_{n-1}$ and rearrange:
$$
	r_{n+1} = \frac{q_1+Q_n}{R_n} - 
	\frac{P_n}{R_n}\;\frac{1}{r_{n}} .
$$
Recall that $|a_1| < 1$; we expect $r_n \to 1/a_1$.
We claim: if
$$
	n \ge 45\qquad\text{and}\qquad
	1-\frac{1}{10n} < r_n < \frac{1}{a_1},
$$
then also
$$
	1-\frac{1}{10(n+1)} < r_{n+1} < \frac{1}{a_1} .
$$
Once the claim is proved, all that remains
is checking
that $p_0,\cdots, p_{45}$ are positive, and
$$
	1 - \frac{1}{450} < r_{45} < \frac{1}{a_1} .
$$
By induction we conclude that $r_n > 0$ for all $n \ge 45$.
So $p_n$ with $n > 45$ is a product of
positive numbers
$$
	p_{45} \;r_{46} r_{47} r_{48} \cdots r_n ,
$$
so $p_n > 0$.

\emph{Proof of the claim.}  Since
$$
	r \mapsto \frac{q_1+Q_n}{R_n} - 
	\frac{P_n}{R_n}\;\frac{1}{r}
$$
is an increasing function, we need only check
$$
	1-\frac{1}{10(n+1)} < 
	\frac{q_1+Q_n}{R_n} - 
	\frac{P_n}{R_n}\;a_1
	 < \frac{1}{a_1}
$$
and
$$
	1-\frac{1}{10(n+1)} < 
	\frac{q_1+Q_n}{R_n} - 
	\frac{P_n}{R_n}\;\frac{1}{1-\frac{1}{10 n}}
	 < \frac{1}{a_1}
$$
where $n \ge 45$.
Your CAS can be used for this.
\end{proof}

A warning for the computations.  If you do this using $20$-digit
arithmetic---as I did at first---you may erroneously conclude that it is
false.  You may see negative coefficients.
With exact arithmetic, we find that $r_{45}$ involves 
integers with more than $100$ digits.  To compare 
$\sqrt{2}$ to a rational
number with $100$-digit numerator and denominator, there
are two methods:
we can square those $100$-digit numbers, or we can use a 
decimal value
of $\sqrt{2}$ accurate to more than $100$ places.
Of course a modern CAS can do either.

\begin{lem}\label{L6}
All Maclaurin coefficients are positive in
$$
	\Hn{a_1}{q_2}{1}{1}{1/2}{1}{z} ,
$$
where $a_1= -576 + 408\sqrt{2}$ and $q_2= -42+30\sqrt{2}$.
\end{lem}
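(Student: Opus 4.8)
The plan is to follow the template of Lemma~\ref{L2}. First I would record the parameters of this Heun general function: $\alpha=\beta=1$, $\gamma=1/2$, $\delta=1$, and hence, from the constraint $\alpha+\beta+1=\gamma+\delta+\epsilon$, $\epsilon=3/2$. Substituting into the general recurrence coefficients gives
\begin{align*}
	R_n &= \textstyle a_1(n+1)\bigl(n+\tfrac{1}{2}\bigr), \\
	Q_n &= \textstyle n\bigl((n-\tfrac{1}{2})(1+a_1)+a_1+\tfrac{3}{2}\bigr), \\
	P_n &= n^2,
\end{align*}
with $p_0=1$ and $p_1=q_2/(a_1\gamma)=2q_2/a_1>0$, since $q_2\approx0.4264>0$ and $a_1\approx0.9991>0$. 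Writing $r_n=p_n/p_{n-1}$ and rearranging the recurrence,
$$
	r_{n+1} = \frac{q_2+Q_n}{R_n} - \frac{P_n}{R_n}\;\frac{1}{r_n}.
$$
Since $|a_1|<1$ is the radius of convergence, I expect $r_n\to 1/a_1$; indeed the limiting map $r\mapsto \frac{1+a_1}{a_1}-\frac{1}{a_1 r}$ has fixed points $1$ and $1/a_1$ (the equation is $(a_1 r-1)(r-1)=0$), with $1/a_1$ attracting, which is what makes the invariant below natural.

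Second, I would prove an inductive trapping claim of the same shape as in Lemma~\ref{L2}: there are an explicit threshold $N$ (expected to be of the same order as the $45$ appearing in Lemma~\ref{L2}) and an explicit constant $C$ such that if $n\ge N$ and $1-\frac{C}{n}<r_n<\frac{1}{a_1}$, then also $1-\frac{C}{n+1}<r_{n+1}<\frac{1}{a_1}$. As in Lemma~\ref{L2}, the map $r\mapsto \frac{q_2+Q_n}{R_n}-\frac{P_n}{R_n}\,\frac{1}{r}$ is strictly increasing for $r>0$, so its range over the interval $\bigl(1-\frac{C}{n},\,\frac{1}{a_1}\bigr)$ lies between its two endpoint values; hence it suffices to verify
$$
	1-\frac{C}{n+1} < \frac{q_2+Q_n}{R_n}-\frac{P_n}{R_n}\;a_1 < \frac{1}{a_1}
	\quad\text{and}\quad
	1-\frac{C}{n+1} < \frac{q_2+Q_n}{R_n}-\frac{P_n}{R_n}\;\frac{1}{1-\frac{C}{n}} < \frac{1}{a_1}
$$
for all $n\ge N$. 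Each of these is a rational inequality in $n$ with coefficients in $\Q(\sqrt2)$, and can be discharged by the CAS.

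Third, I would settle the base of the induction: compute $p_0,p_1,\dots,p_N$ explicitly, check each is positive, and check $1-\frac{C}{N}<r_N<\frac{1}{a_1}$. Induction then gives $r_n>0$ for all $n\ge N$, so for $n>N$ we have $p_n=p_N\,r_{N+1}r_{N+2}\cdots r_n>0$, and all Maclaurin coefficients are positive.

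The main obstacle is the joint choice of $N$ and $C$ so that all the endpoint inequalities hold simultaneously for every $n\ge N$: as $n\to\infty$ the inequalities become tight, degenerating to the fixed-point relation $(a_1 r-1)(r-1)=0$, so $N$ must be large enough that the subleading terms no longer spoil them, while $C$ must be large enough to absorb the finite-$n$ undershoot of $r_n$ below $1$ yet small enough to keep $1-C/n$ positive throughout $n\ge N$. A secondary but real nuisance, already flagged after Lemma~\ref{L2}, is that the comparison of $\sqrt2$ with the large rationals occurring in $r_N$ and in the coefficient inequalities must be carried out in exact arithmetic; low-precision floating point can produce spurious sign changes.
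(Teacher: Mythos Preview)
Your proposal is correct and follows essentially the same route as the paper: the same parameter identification, the same $R_n,Q_n,P_n$, the same ratio $r_n$ with the same trapping interval $(1-C/n,\,1/a_1)$, and the same reduction to four endpoint inequalities plus a finite initial check. The paper simply records the concrete constants $N=18$ and $C=\tfrac14$ (so in fact a smaller threshold than your guessed ``same order as $45$''), leaving the verification to the CAS exactly as you outline.
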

\begin{proof}
The proof is similar to Lemma~\ref{L2}.
Let $p_n$ be the coefficients, and
$r_n = p_n/p_{n-1}$.  Then
$$
	r_{n+1} = \frac{q_2+Q_n}{R_n}
	-\frac{P_n}{R_n}\;\frac{1}{r_n},
$$
with
\begin{align*}
	R_n &= \textstyle a_1(n+1)(n+\frac{1}{2}) ,
	\\
	Q_n &= \textstyle 
	n\big((n-\frac{1}{2})(1+a_1)+a_1 + \frac{3}{2}\big) ,
	\\
	P_n &= \textstyle n^2 .
\end{align*}
We claim: If
$$
	n \ge 18\qquad\text{and}\qquad
	1-\frac{1}{4n} < r_n < \frac{1}{a_1} ,
$$
then also
$$
	1-\frac{1}{4(n+1)} < r_{n+1} < \frac{1}{a_1} .
$$
The remainder of the proof is similar to Lemma~\ref{L2}.
\end{proof}

\begin{pr}\label{v2pos}
$v_2(x)>0$ for $\co < x < c$.
\end{pr}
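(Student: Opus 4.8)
The plan is to read off the factored formula for $v_2$ from Proposition~\ref{basicheun} and check that each factor is strictly positive on the open interval $(\co,c)$.

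First I would dispose of the elementary prefactor $\dfrac{(x-\co)(c-x)^{1/2}}{c-\co}$. For $\co<x<c$ we have $x-\co>0$, $c-x>0$ (hence $(c-x)^{1/2}>0$), and $c-\co>0$ by Notation~\ref{notation:top}, so this prefactor is strictly positive. Next I would track the argument $z=1-\co x$ of the two Heun functions as $x$ runs over $(\co,c)$. Since $z$ is a strictly decreasing affine function of $x$, since $\co c=1$, and since $1-\co^2=-576+408\sqrt2=a_1$ (a one-line computation from $\co=17-12\sqrt2$), the argument $z$ ranges over the open interval $(0,a_1)$. Because $0<a_1<1$, the common radius of convergence $\min\{1,|a_1|\}=a_1$ of the two Heun series in Proposition~\ref{basicheun} strictly exceeds every such $z$, so both series converge at $z=1-\co x$ for every $x\in(\co,c)$.

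Then I would invoke Lemma~\ref{L2} and Lemma~\ref{L6}: all Maclaurin coefficients of $\Hn{a_1}{q_1}{3/2}{3/2}{3/2}{1}{z}$ and of $\Hn{a_1}{q_2}{1}{1}{1/2}{1}{z}$ are positive, and the constant term of each is $1$. Hence for $z\in(0,a_1)$ each of these functions is a convergent sum of a positive constant and positive terms, so each is strictly positive. Multiplying the positive prefactor by the two positive Heun values gives $v_2(x)>0$ for $\co<x<c$, which is the claim.

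There is no real obstacle remaining here: the substantive work was already carried out in Lemmas~\ref{L2} and~\ref{L6}, which establish coefficient positivity for exactly the two Heun functions appearing in the formula for $v_2$. The only point that needs a moment's care is confirming that $1-\co x$ stays strictly inside the common radius of convergence $a_1$, i.e. that the endpoint value $1-\co^2$ equals precisely $a_1$ and is attained only in the limit $x\to\co^+$, which is outside the open interval under consideration.
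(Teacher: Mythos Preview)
Your proof is correct and follows essentially the same approach as the paper: use Proposition~\ref{basicheun} to factor $v_2$, invoke Lemmas~\ref{L2} and~\ref{L6} for coefficient positivity of the two Heun factors, and note that the prefactor is trivially positive. Your additional care in verifying that the argument $1-\co x$ lies in $(0,a_1)$, strictly inside the radius of convergence, makes explicit a point the paper leaves implicit.
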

\begin{proof}
By Lemma~\ref{L2}, all Maclaurin coefficients of
$$
	\Hn{a_1}{q_1}{
	3/2}{3/2}{3/2}{1}{z}
$$
are positive.  It has radius of convergence
$a_1 = 1-\co^2$, so
$$
	\Hn{a_1}{q_1}{
	3/2}{3/2}{3/2}{1}{1-\co x} > 0
$$
for all $x$ with $\co < x < c$.  By Lemma~\ref{L6},
all Maclaurin  coefficients of
$$
	\Hn{a_1}{q_2}{1}{ 1}{1/2}{ 1}{z}
$$
are positive.  Again,
$$
	\Hn{a_1}{q_2}{1}{ 1}{1/2}{ 1}{1-\co x} > 0
$$
for all $x$ with $\co < x < c$.  Also
$$
	\frac{(x-\co)(c-x)^{1/2}}{c-\co}
$$
is positive on $(\co,c)$.  The product of
three positive factors
is $v_2(x)$ on $(\co,c)$, so $v_2(x)>0$.
\end{proof}

\subsection*{Hypergeometric Function}
Some Heun functions can be expressed in terms of
hypergeometric ${}_2F_1$ functions \cite[Chap.~2--3]{AAR}.  Here,
we will use only one of them.\footnote{%
\texttt{hypergeom([1/3,2/3],[1],z)}}

\begin{de}
$\displaystyle
{}_2F_1\left(\frac{1}{3},\frac{2}{3};1;z\right)
=\sum_{n=0}^\infty \frac{(3n)!}{(n!)^3}\,\frac{z^n}{27^n}$
\end{de}

\begin{lem}\label{gauss}
{\rm(a)}~${}_2F_1\left(\frac{1}{3},\frac{2}{3};1;z\right)$
has radius of convergence $1$.
{\rm(b)}~For $0<z<1$, we have
$\;{}_2F_1\left(\frac{1}{3},\frac{2}{3};1;z\right) > 1$.
{\rm(c)}~As $\delta \to 0^+$,
\begin{align*}
	{}_2F_1\left(\frac{1}{3},\frac{2}{3};1;1-\delta\right) &=
	-\frac{\sqrt{3}}{2\pi}\log \delta
	+\frac{3\sqrt{3}\log 3}{2\pi} + o(1) .
\end{align*}
\end{lem}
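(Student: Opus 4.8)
The plan is to handle (a) and (b) by direct inspection of the defining power series, and to derive (c) from the classical logarithmic connection formula for ${}_2F_1$ at the resonant point $z=1$.

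For (a), let $c_n = (3n)!/\big((n!)^3\,27^n\big)$ denote the $n$-th Maclaurin coefficient. Then
\[
	\frac{c_{n+1}}{c_n} = \frac{(3n+1)(3n+2)(3n+3)}{27\,(n+1)^3}\longrightarrow 1 ,
\]
so the radius of convergence is exactly $1$ by the ratio test. (Stirling's formula moreover gives $c_n \sim \sqrt{3}/(2\pi n)$, which I will re-use below.) For (b), on $0<z<1$ every term $c_n z^n$ is positive, the $n=0$ term equals $1$, and the $n=1$ term equals $\tfrac29 z>0$; hence the sum strictly exceeds $1$.

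For (c), I would invoke the standard expansion of ${}_2F_1(a,b;c;z)$ about $z=1$ in the case $c=a+b$ (see \cite{AAR}, or Gauss's logarithmic connection formula): writing $\delta = 1-z$ and $\psi = \Gamma'/\Gamma$,
\[
	{}_2F_1(a,b;a+b;1-\delta)
	= \frac{\Gamma(a+b)}{\Gamma(a)\,\Gamma(b)}
	\sum_{n=0}^{\infty}\frac{(a)_n(b)_n}{(n!)^2}
	\Big(2\psi(n+1)-\psi(a+n)-\psi(b+n)-\log\delta\Big)\delta^n .
\]
For $a=\tfrac13,\ b=\tfrac23$ one has $(a)_n(b)_n/(n!)^2 = c_n = O(1/n)$, while the bracketed factor is bounded in $n$ by a constant plus $|\log\delta|$; hence the terms with $n\ge 1$ contribute $O\big(\delta\log(1/\delta)\big) = o(1)$ as $\delta\to 0^+$, leaving only the $n=0$ term:
\[
	{}_2F_1\!\left(\tfrac13,\tfrac23;1;1-\delta\right)
	= \frac{\Gamma(1)}{\Gamma(\tfrac13)\,\Gamma(\tfrac23)}\Big(-\log\delta + 2\psi(1)-\psi(\tfrac13)-\psi(\tfrac23)\Big) + o(1).
\]
It then remains only to evaluate the two constants. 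By Euler's reflection formula, $\Gamma(\tfrac13)\Gamma(\tfrac23) = \pi/\sin(\pi/3) = 2\pi/\sqrt3$, so the prefactor equals $\sqrt3/(2\pi)$. For the digamma sum, the triplication formula $\psi(3x) = \log 3 + \tfrac13\big(\psi(x)+\psi(x+\tfrac13)+\psi(x+\tfrac23)\big)$ at $x=\tfrac13$ gives $\psi(1) = \log 3 + \tfrac13\big(\psi(\tfrac13)+\psi(\tfrac23)+\psi(1)\big)$, whence $\psi(\tfrac13)+\psi(\tfrac23) = 2\psi(1) - 3\log 3$ and therefore $2\psi(1)-\psi(\tfrac13)-\psi(\tfrac23) = 3\log 3$. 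Substituting,
\[
	{}_2F_1\!\left(\tfrac13,\tfrac23;1;1-\delta\right) = -\frac{\sqrt3}{2\pi}\log\delta + \frac{3\sqrt3\,\log 3}{2\pi} + o(1),
\]
which is (c).

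The one \emph{genuinely nontrivial} point is the constant term $3\sqrt3\log 3/(2\pi)$. The leading term $-\tfrac{\sqrt3}{2\pi}\log\delta$ can be had cheaply from the asymptotic $c_n\sim\sqrt3/(2\pi n)$ together with the Abelian fact that $\sum c_n z^n \sim \sum \tfrac{\sqrt3}{2\pi n}z^n = -\tfrac{\sqrt3}{2\pi}\log(1-z)$ as $z\to1^-$; but isolating the $O(1)$ constant requires either the full logarithmic connection formula above (with its $\psi$-terms) or a more delicate second-order analysis of the series near $z=1$. I therefore expect the main obstacle to be correctly quoting and normalizing that connection formula in the resonant case and then reducing the $\Gamma$- and $\psi$-values; everything else is routine.
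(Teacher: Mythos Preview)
Your proof is correct and follows essentially the same route as the paper: ratio test for (a), positivity of coefficients for (b), and the Gauss logarithmic connection formula for ${}_2F_1(a,b;a+b;z)$ at $z=1$ for (c), followed by evaluation of $\Gamma(\tfrac13)\Gamma(\tfrac23)$ via reflection and of $\psi(\tfrac13)+\psi(\tfrac23)$. The only cosmetic difference is that you supply the digamma values via the triplication identity, whereas the paper cites a theorem from \cite{AAR}; your added remark bounding the $n\ge1$ terms in the connection series is a harmless elaboration of what the paper takes for granted.
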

\begin{proof}
(a) Ratio test.

(b) All Maclaurin coefficients are positive, and the
constant term is $1$.

(c) Due to Gauss (or perhaps Goursat?), see
\cite[Thm.~2.1.3]{luke}~\cite{wolfram},
\begin{align*}
	{}_2F_1\left(\frac{1}{3},\frac{2}{3};1;1-\delta\right) &=
	\frac{\Gamma(1)}{\Gamma(\frac{1}{3})\Gamma(\frac{2}{3})}
	\Big[\textstyle\log\frac{1}{\delta} - 2\gamma 
	-\psi\left(\frac{1}{3}\right)-\psi\left(\frac{2}{3}\right)
	\Big] + o(1)
	\\ &=
	-\frac{\sqrt{3}}{2\pi}\log \delta
	+\frac{3\sqrt{3}\log 3}{2\pi} + o(1) .
\end{align*}
Here $\gamma$ is Euler's constant and $\psi$ is the
digamma function.  Use \cite[Thm.~1.2.7]{AAR} to
evaluate the digamma of a rational number.
\end{proof}

\begin{lem}\label{Fderiv}
Let the degree $1$ Taylor polynomial for
${}_2F_1\left(\frac{1}{3},\frac{2}{3};1;z\right)$ at
$z_0=1/(2^{3/2}(\sqrt2+1))$ be
${}_2F_1\left(\frac{1}{3},\frac{2}{3};1;z\right)
= S_0 + S_1 \cdot(z-z_0) + o(|z-z_0|)$ as $z \to z_0$.
Then
$$
	S_0\cdot(3S_1+\sqrt{2}S_0) = \frac{3^{3/2}2^{1/2}}{\pi} .
$$
\end{lem}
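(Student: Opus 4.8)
The plan is to reduce the identity to a Legendre‑type (Wronskian) relation. Throughout write $F(z)={}_2F_1\!\left(\frac13,\frac23;1;z\right)$, so that $S_0=F(z_0)$, $S_1=F'(z_0)$, and
$$z_0=\frac{1}{2^{3/2}(\sqrt2+1)}=\frac{2-\sqrt2}{4},\qquad z_1:=1-z_0=\frac{2+\sqrt2}{4}.$$
The two facts that make this pair special are
$$z_0z_1=z_0(1-z_0)=\tfrac18\qquad\text{and}\qquad (z_0z_1)^{1/3}+\big((1-z_0)(1-z_1)\big)^{1/3}=2\cdot\tfrac12=1,$$
the latter being exactly the degree‑$2$ modular equation of Ramanujan's cubic theory (whose period function is $F$). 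Since $3^{3/2}2^{1/2}/\pi=3\sqrt6/\pi$, the goal is to show $3F(z_0)F'(z_0)+\sqrt2\,F(z_0)^2=3\sqrt6/\pi$.

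First I would establish a Wronskian relation. Both $F$ and $z\mapsto F(1-z)$ solve $z(1-z)w''+(1-2z)w'-\frac29w=0$, so their Wronskian equals $K/(z(1-z))$ for a constant $K$; letting $z\to1^-$ and feeding in Lemma~\ref{gauss}(c) — which gives $F(z)=-\frac{\sqrt3}{2\pi}\log(1-z)+O(1)$, hence $F'(z)=\frac{\sqrt3}{2\pi(1-z)}+O(\log(1-z))$, while $F(1-z)\to1$ and $\frac{d}{dz}F(1-z)\to-\frac29$ — identifies $K=-\frac{\sqrt3}{2\pi}$. Evaluating the Wronskian identity at $z=z_0$, where $z_0(1-z_0)=\frac18$, yields
$$F(z_0)\,F'(z_1)+F'(z_0)\,F(z_1)=\frac{4\sqrt3}{\pi}.$$

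Next I would establish the degree‑$2$ transformation $F(z_1)=\sqrt2\,F(z_0)$ and $F'(z_1)=\sqrt2\,F'(z_0)+\frac43\,F(z_0)$. The idea is to use the standard parametrization $q\,dz/dq=z(1-z)F(z)^2$, so that $F(z(q))=a(q)$ (a Borwein cubic theta function) and, taking $z(q)=z_1$, $z(q^2)=z_0$, the chain rule alone forces the multiplier $M:=F/(F\circ\psi)$ of the modular map $\psi$ (defined by $z(q^2)=\psi(z(q))$) to satisfy $M(z_1)^2=2\,z_0(1-z_0)/\big(z_1(1-z_1)\,\psi'(z_1)\big)$. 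Implicit differentiation of $(\alpha\beta)^{1/3}+((1-\alpha)(1-\beta))^{1/3}=1$ at $(\alpha,\beta)=(z_1,z_0)$ — where both cube roots equal $\frac12$ — gives $\psi'(z_1)=1$ and $\psi''(z_1)=\frac{20\sqrt2}{3}$, whence $M(z_1)=\sqrt2$ and $M'(z_1)=\frac43$; differentiating $F=M\cdot(F\circ\psi)$ at $z_1$ then produces $F'(z_1)=M'(z_1)F(z_0)+M(z_1)F'(z_0)\psi'(z_1)=\frac43F(z_0)+\sqrt2\,F'(z_0)$, as claimed. (Equivalently one could derive both relations from the quadratic transformation $F(z)={}_2F_1(\frac16,\frac13;1;4z(1-z))$, valid near $z=0$, by continuing $F$ through the fold point $z=\frac12$, where the $\sqrt{1-4z(1-z)}$ branch reverses sign, so that $F(z_0)$ and $F(z_1)$ become the two values $A(\frac12)\pm\frac1{\sqrt2}B(\frac12)$ produced by the $w\to1$ connection formula for ${}_2F_1(\frac16,\frac13;1;w)$.) This step is the only delicate one, and I expect the main obstacle to be pinning down the constant $\frac43$ in the derivative relation: that forces the second‑order analysis of the modular equation at its self‑complementary point $z_0+z_1=1$ (equivalently, the $q$‑derivative of the degree‑$2$ multiplier), rather than just its value.

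Finally I would substitute: plugging the two relations of the previous step into the Wronskian identity makes the $z_1$‑terms cancel, leaving $2\sqrt2\,F(z_0)F'(z_0)+\frac43\,F(z_0)^2=\frac{4\sqrt3}{\pi}$; multiplying through by $\frac{3}{2\sqrt2}$ gives $3F(z_0)F'(z_0)+\sqrt2\,F(z_0)^2=\frac{6\sqrt3}{\sqrt2\,\pi}=\frac{3\sqrt6}{\pi}=\frac{3^{3/2}2^{1/2}}{\pi}$, i.e. $S_0(3S_1+\sqrt2\,S_0)=3^{3/2}2^{1/2}/\pi$, as required. This also explains the shape of the answer: it is in essence the Step‑1 Wronskian constant, which is why it comes out as a rational multiple of $1/\pi$.
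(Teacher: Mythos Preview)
Your argument is correct and genuinely different from the paper's. The paper (in its Appendix) evaluates $S_0$ and $S_1$ \emph{individually}: it uses the modular parameterization
\[
{}_2F_1\!\left(\tfrac13,\tfrac23;1;\tfrac{27}{j_{3B}(\tau)+27}\right)
=\frac{\eta(3\tau)^3}{\eta(\tau)}\,(j_{3B}(\tau)+27)^{1/3},
\]
specializes to $\tau=i\sqrt{6}/3$, and reads off $S_0=\FF:={}_2F_1(\tfrac16,\tfrac13;1;\tfrac12)$ and $S_1=\sqrt{6}/(\pi\FF)-\sqrt{2}\,\FF/3$ from explicit eta and $E_2$ values at that CM point; the combination $S_0(3S_1+\sqrt2\,S_0)$ then collapses because the $\FF^2$-terms cancel. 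You instead never compute $S_0$ or $S_1$ separately: you combine the Legendre/Wronskian relation for the pair $F(z),F(1-z)$ (which supplies the $1/\pi$) with the degree-$2$ modular equation of the signature-$3$ theory at its self-complementary point $z_0+z_1=1$ (which supplies the multiplier $\sqrt2$ and its derivative $4/3$). Your implicit-differentiation values $\psi'(z_1)=1$, $\psi''(z_1)=20\sqrt2/3$, and hence $M(z_1)=\sqrt2$, $M'(z_1)=4/3$, all check out, and the final substitution is clean. What your route buys is a transparent explanation of \emph{why} the answer is a rational multiple of $1/\pi$ (it is literally the Wronskian constant $-\sqrt3/(2\pi)$ divided by $z_0(1-z_0)=1/8$, then rescaled) and it avoids any special-value input for $\eta$ or $E_2$; what the paper's route buys is the finer information $S_0=\FF$ and an explicit $S_1$, at the cost of importing those CM evaluations. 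The one place you lean on background is the relation $q\,dz/dq=z(1-z)F(z)^2$ underlying Ramanujan's cubic theory; citing Berndt--Bhargava--Garvan (or deriving it from the quadratic transformation you mention) would make Step~2 fully self-contained.
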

\begin{proof}
See Appendix.
\end{proof}

To complete the proof of Theorem 1, we do not need the
exact value in Lemma~\ref{Fderiv}, but only that it is positive;
which is clear from the fact that all
Maclaurin coefficients of ${}_2F_1\left(\frac{1}{3},\frac{2}{3};1;z\right)$
are positive and $z_0$ is positive.

\begin{no}
\begin{align*}
	\mu(x) &= \frac{\left(3-3x-\sqrt{x^2-34x+1}\;
	\right)^{1/2}}{\sqrt{2}\;(x+1)}
	\\
	\mu_2(x) &= \frac{\left(3-3x+\sqrt{x^2-34x+1}\;
	\right)^{1/2}}{\sqrt{2}\;(x+1)}
	\\
	\lambda(x) &=
	\frac{x^3+30 x^2-24 x + 1 
	-(x^2-7x+1)\sqrt{x^2-34x+1}}{2(x+1)^3}
	\\
	\lambda_2(x) &=
	\frac{x^3+30 x^2-24 x + 1 
	+(x^2-7x+1)\sqrt{x^2-34x+1}}{2(x+1)^3}
\end{align*}
 (See Figures \ref{fig:mu} and~\ref{fig:lambda}.)
 \end{no}

\begin{figure}[thbp] 
   \centering
   \includegraphics[width=3in]{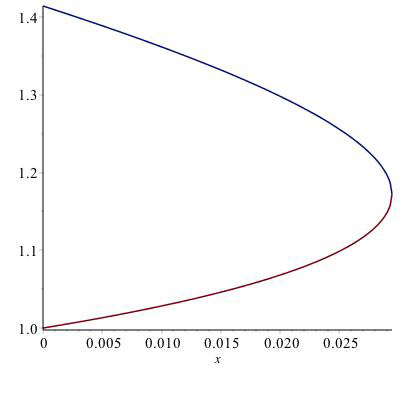} 
   \vskip-0.4in
   \caption{$\mu$ (bottom) and $\mu_2$ (top)}
   \label{fig:mu}
\end{figure}

\begin{figure}[htbp] 
   \centering
   \includegraphics[width=3in]{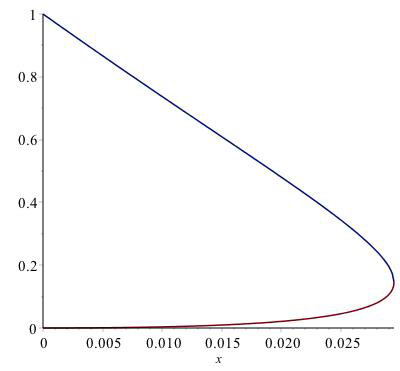} 
   \vskip-0.2in
   \caption{$\lambda$ (bottom) and $\lambda_2$ (top)}
   \label{fig:lambda}
\end{figure}

\begin{lem}\label{mulambda}
For $0<x<\co$, we have $\mu(x) >1$,
$\mu_2(x)>1$, $0 < \lambda(x)< 1$, and $0 < \lambda_2(x) < 1$.
\end{lem}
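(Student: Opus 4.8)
\emph{Proof proposal.}
The plan is to exploit the fact that $\lambda(x)$ and $\lambda_2(x)$ are the two roots of a quadratic with coefficients rational in $x$, and likewise $\mu(x)^2$ and $\mu_2(x)^2$. The four inequalities then reduce to sign conditions on a handful of explicit polynomials in $x$, each transparently positive once one knows $0<x<\co$ together with the elementary estimate $\co<1/24$.

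\emph{Step 1 (algebraic identities).} Write $s=\sqrt{x^2-34x+1}$, which is real on $(0,\co)$ since $\co=17-12\sqrt2$ is the smaller root of $x^2-34x+1$. Put $N=x^3+30x^2-24x+1$. I would have the CAS (or a direct expansion) confirm
\[
	N^2-(x^2-7x+1)^2(x^2-34x+1)=108\,x^2(x+1)^3,\qquad
	9(1-x)^2-(x^2-34x+1)=8(x+1)^2 .
\]
The first identity shows $\lambda,\lambda_2$ are the roots of $(x+1)^3T^2-NT+27x^2=0$, so $\lambda+\lambda_2=N/(x+1)^3$ and $\lambda\lambda_2=27x^2/(x+1)^3$. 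The second shows $\mu^2,\mu_2^2$ are the roots of $(x+1)^2T^2-3(1-x)T+2=0$, so $\mu^2+\mu_2^2=3(1-x)/(x+1)^2$ and $\mu^2\mu_2^2=2/(x+1)^2$. The second identity also gives $(3-3x)^2>x^2-34x+1$; since $3-3x>0$ on $(0,\co)$, we get $3-3x-s>0$ there, so $\mu,\mu_2$ are well-defined positive square roots.

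\emph{Step 2 (reduction to polynomial signs).} From the symmetric functions,
\[
	(1-\lambda)(1-\lambda_2)=\frac{(x+1)^3-N+27x^2}{(x+1)^3}=\frac{27x}{(x+1)^3},\qquad
	(1-\lambda)+(1-\lambda_2)=\frac{2(x+1)^3-N}{(x+1)^3}=\frac{x^3-24x^2+30x+1}{(x+1)^3},
\]
\[
	(\mu^2-1)(\mu_2^2-1)=\frac{(x+1)^2-3(1-x)+2}{(x+1)^2}=\frac{x(x+5)}{(x+1)^2},\qquad
	(\mu^2-1)+(\mu_2^2-1)=\frac{3(1-x)-2(x+1)^2}{(x+1)^2}=\frac{1-7x-2x^2}{(x+1)^2}.
\]
For $0<x<\co$ I would use $\co<1/24$ (equivalently $407<288\sqrt2$, i.e. $407^2=165649<165888$), whence $24x<1$, $24x^2<1$, and $7x+2x^2<7/24+2/576<1$. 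Hence $N=x^3+30x^2+(1-24x)>0$, $x^3-24x^2+30x+1=x^3+30x+(1-24x^2)>0$, $1-7x-2x^2>0$, while $27x$, $27x^2$, $x(x+5)$ are visibly positive. Therefore: $\lambda+\lambda_2>0$ and $\lambda\lambda_2>0$ force $\lambda,\lambda_2>0$; $(1-\lambda)(1-\lambda_2)>0$ and $(1-\lambda)+(1-\lambda_2)>0$ force $\lambda<1$ and $\lambda_2<1$; and $(\mu^2-1)(\mu_2^2-1)>0$ together with $(\mu^2-1)+(\mu_2^2-1)>0$ force $\mu^2>1$ and $\mu_2^2>1$, hence $\mu>1$ and $\mu_2>1$.

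\emph{Main obstacle.} There is no deep difficulty; the real content is the first identity $N^2-(x^2-7x+1)^2(x^2-34x+1)=108x^2(x+1)^3$, which is what makes $\lambda\lambda_2$ rational and causes the whole computation to collapse — I would simply expand both sides or defer to the CAS. The only genuinely numerical input is $\co<1/24$, which is immediate, and the only care point is checking $3-3x-s>0$ (done in Step 1) before reasoning about $\mu^2,\mu_2^2$.
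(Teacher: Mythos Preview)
Your proof is correct. The paper's own proof consists of the single phrase ``Elementary inequalities,'' so there is no detailed argument to compare against; you have supplied one. Your Vieta-style reduction---treating $\lambda,\lambda_2$ (resp.\ $\mu^2,\mu_2^2$) as the two roots of an explicit quadratic in $T$ and then checking signs of the elementary symmetric functions of $\lambda,\lambda_2$, of $1-\lambda,1-\lambda_2$, and of $\mu^2-1,\mu_2^2-1$---is clean and avoids handling each function separately. The key identity $N^2-(x^2-7x+1)^2(x^2-34x+1)=108x^2(x+1)^3$ and the companion $9(1-x)^2-(x^2-34x+1)=8(x+1)^2$ both check out by direct expansion, and the numerical input $\co<1/24$ is exactly what is needed to make all the remaining polynomial signs obvious. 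The only minor remark is that you don't actually need to verify $N>0$ separately for the $\lambda>0$ step, since $N>0$ already follows from the later inequality $x^3-24x^2+30x+1>0$ together with $(1-\lambda)+(1-\lambda_2)>0$ and $\lambda\lambda_2>0$; but the redundancy does no harm.
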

\begin{proof}
Elementary inequalities.
\end{proof}

\begin{lem}\label{fghyper}
As $x \to 0^+$,
\begin{align*}
	\mu(x)
	\;{}_2F_1\left(\frac{1}{3},\frac{2}{3};1;\lambda(x)\right) 
	&= 1 + \frac{5}{2} \,x + O(x^2),
	\\
	\mu_2(x)
	\;{}_2F_1\left(\frac{1}{3},\frac{2}{3};1;\lambda_2(x)\right) 
	&=\frac{-\sqrt{3}}{\pi\sqrt{2}}\,\log x + o(1).
\end{align*}
The second one indeed has constant term zero.
\end{lem}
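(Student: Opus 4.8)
\emph{Proof proposal.}
Both asymptotic statements will follow by composing known local expansions: those of the elementary algebraic functions $\mu,\mu_2,\lambda,\lambda_2$ at $x=0$, with the Maclaurin series of ${}_2F_1\!\left(\tfrac13,\tfrac23;1;z\right)$ at $z=0$ for the first line, and with the logarithmic connection formula of Lemma~\ref{gauss}(c) at $z=1$ for the second line. Note first that $\mu,\mu_2,\lambda,\lambda_2$ are all analytic near $x=0$: the inner radical $\sqrt{x^2-34x+1}$ equals $1$ at $x=0$ and is analytic there, and the outer radicands $3-3x\mp\sqrt{x^2-34x+1}$ take the nonzero values $2$ and $4$ at $x=0$. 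So ``$O(x^2)$'' and ``$o(1)$'' in the statement are legitimate.

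Expanding (by hand or CAS, with exact arithmetic: $\sqrt{x^2-34x+1}=1-17x+O(x^2)$, $(x^2-7x+1)\sqrt{x^2-34x+1}=1-24x+O(x^2)$, $2(x+1)^3=2+6x+O(x^2)$) gives
\[
	\mu(x)=1+\tfrac52x+O(x^2),\qquad \lambda(x)=27x^2+O(x^3),
\]
\[
	\mu_2(x)=\sqrt2+O(x),\qquad \lambda_2(x)=1-27x+O(x^2).
\]
Since $\lambda(0)=\lambda'(0)=0$, we get ${}_2F_1\!\left(\tfrac13,\tfrac23;1;\lambda(x)\right)=1+O(\lambda(x))=1+O(x^2)$, so the first product equals $\mu(x)\bigl(1+O(x^2)\bigr)=1+\tfrac52x+O(x^2)$, which is the first line. (This product is in fact $\sqrt{u_0(x)}$, by Propositions~\ref{basicheun} and~\ref{AperyGen}, consistent with $\sqrt{1+5x+\cdots}=1+\tfrac52x+\cdots$; but this identification is not needed.)

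For the second line, Lemma~\ref{mulambda} gives $0<\lambda_2(x)<1$ on $(0,\co)$, so $\delta:=1-\lambda_2(x)>0$ and, by the expansion above, $\delta=27x\bigl(1+O(x)\bigr)\to0^+$. Lemma~\ref{gauss}(c) then gives
\[
	{}_2F_1\!\left(\tfrac13,\tfrac23;1;\lambda_2(x)\right)
	=-\frac{\sqrt3}{2\pi}\log\delta+\frac{3\sqrt3\log3}{2\pi}+o(1)
	=-\frac{\sqrt3}{2\pi}\log x+o(1),
\]
because $\log\delta=\log27+\log x+O(x)=3\log3+\log x+o(1)$, so the two constant contributions cancel exactly. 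Multiplying by $\mu_2(x)=\sqrt2+O(x)$ and using $O(x)\log x=o(1)$,
\[
	\mu_2(x)\,{}_2F_1\!\left(\tfrac13,\tfrac23;1;\lambda_2(x)\right)
	=-\frac{\sqrt6}{2\pi}\log x+o(1)=-\frac{\sqrt3}{\pi\sqrt2}\log x+o(1),
\]
and the disappearance of the additive constant is precisely the assertion ``the second one indeed has constant term zero.''

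The step I expect to be the crux is this exact cancellation of constants in the second line: it hinges on the leading coefficient of $\delta=1-\lambda_2(x)$ being precisely $27=3^3$, matched against the $\tfrac{3\sqrt3\log3}{2\pi}$ appearing in Lemma~\ref{gauss}(c), together with $\mu_2(0)=\sqrt2$ exactly. An arithmetic slip in the expansion of $(x^2-7x+1)\sqrt{x^2-34x+1}$ or of $2(x+1)^3$ would leave a spurious nonzero constant, so I would verify those with exact (not floating-point) arithmetic, as warned elsewhere in the paper; everything else is routine finite computation.
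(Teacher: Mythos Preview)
Your proposal is correct and follows essentially the same approach as the paper: compute the Taylor expansions of $\mu,\mu_2,\lambda,\lambda_2$ at $x=0$, compose with the Maclaurin series of ${}_2F_1$ for the first line and with the logarithmic connection formula of Lemma~\ref{gauss}(c) for the second, and observe that $\log 27=3\log 3$ cancels the constant. The paper's proof is the same computation, presented more tersely.
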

\noindent\emph{Proof.}
Compute (as $z \to 0$ and $x \to 0$):
\begin{align*}
	\mu(x) &= 1 + \frac{5}{2} x + O(x^2)
	\\
	\lambda(x) &= 27x^2 + O(x^3)
	\\
	{}_2F_1\left(\frac{1}{3},\frac{2}{3};1;z\right) &=
	1 + \frac{2}{9} z + O(z^2)
	\\
	{}_2F_1\left(\frac{1}{3},\frac{2}{3};1;\lambda(x)\right) &=
	1 + 6 x^2 + O(x^3)
	\\
	\mu(x)\;{}_2F_1\left(\frac{1}{3},\frac{2}{3};1;
	\lambda(x)\right) &= 1 + \frac{5}{2} x + O(x^2)
\end{align*}
For the second one, we apply Lemma~\ref{gauss}(c).
As $x \to 0$:
\begin{align*}
	\mu_2(x) &= \sqrt{2} - \frac{7}{\sqrt{2}} x + O(x^2)
	\\
	\lambda_2(x) &= 1 - 27 x + O(x^2)
	\\
	{}_2F_1\left(\frac{1}{3},\frac{2}{3};1;
	\lambda_2(x)\right) &=
	 -\frac{\sqrt{3}}{2\pi} \log (27 x)
	+\frac{3\sqrt{3}\log 3}{2\pi} + o(1)
	\\ &
	= -\frac{\sqrt{3}}{2\pi}(\log 27 + \log x)
	+\frac{3\sqrt{3}\log 3}{2\pi} + o(1)
	\\ &
	= -\frac{\sqrt{3}}{2\pi}\log x
	 + o(1)
	\\
	\mu_2(x){}_2F_1\left(\frac{1}{3},\frac{2}{3};1;
	\lambda_2(x)\right) &= -\frac{\sqrt{3}}{\sqrt{2}\,\pi}\log x + o(1)
	\qquad\qed
\end{align*}

\begin{pr}\label{uhyper}
\begin{align*}
	u_0(x) &= \mu(x)^2
	\;{}_2F_1\left(\frac{1}{3},\frac{2}{3};1;\lambda(x)\right)^2 ,
	\\
	v_0(x) &= - \frac{2\pi}{\sqrt{3}(x+1)}
	\;{}_2F_1\left(\frac{1}{3},\frac{2}{3};1;\lambda(x)\right)
	\;{}_2F_1\left(\frac{1}{3},\frac{2}{3};1;\lambda_2(x)\right) ,
	\\
	u_\infty(z) &=
	\frac{1}{z}
	\mu\left(\frac{1}{z}\right)^2
	{}_2F_1\left(\frac{1}{3},\frac{2}{3};1;
	\lambda\left(\frac{1}{z}\right)\right)^2 .
\end{align*}
\end{pr}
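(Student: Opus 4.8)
The plan is to proceed exactly as in Proposition~\ref{basicheun}: verify that each right-hand side solves (\ref{DE3}) on the relevant interval, and that it carries the leading term prescribed in Notation~\ref{basicnotation} and supplied by Proposition~\ref{frobenius}, which pins the solution down uniquely. The third identity then needs no separate argument: as in the proof that $u_\infty(z)=u_0(1/z)/z$, the substitution $u(x)\mapsto u(1/z)/z$ sends solutions of (\ref{DE3}) to solutions, so the displayed formula for $u_\infty$ is just that substitution applied to the formula for $u_0$, and $\mu(1/z)\to1$, $\lambda(1/z)\to0$ as $z\to\infty$ give the required $u_\infty(z)=1/z+o(1/z)$. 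So it suffices to treat $u_0$ and $v_0$.

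Conceptually, the reason these particular expressions appear is that (\ref{DE3}) is a symmetric square. By Proposition~\ref{frobenius} the local solution bases of (\ref{DE3}) at $0$ (namely $1,\log x,(\log x)^2$) and at $\co$ and at $c$ (namely $1,\ (x-\cdot)^{1/2},\ (x-\cdot)$) are each of the shape $\{y_1^{\,2},\ y_1y_2,\ y_2^{\,2}\}$; so (\ref{DE3}) is the symmetric square of a second-order equation $L$, and $L$ turns out to be an algebraic pullback of the Gauss equation for $F(z):={}_2F_1(\tfrac13,\tfrac23;1;z)$. With $g(x)=\mu(x)F(\lambda(x))$ and $h(x)=\mu_2(x)F(\lambda_2(x))$ --- a pair of independent solutions of that $L$, the first holomorphic with $g(0)=1$, the second $\sim\text{const}\cdot\log x$ at $0$ --- the three identities read $u_0=g^2$, $v_0=c_1gh$ with $c_1=-2\pi/\sqrt6$, and $u_\infty(z)=g(1/z)^2/z$, which is why only squares and the single cross-product occur. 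I use this picture only as a guide.

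For the verification itself I would recall the Gauss equation $z(1-z)F''+(1-2z)F'-\tfrac29F=0$, set $F_0=F(\lambda(x))$, $F_1=F'(\lambda(x))$ (and, for $v_0$, also $G_0=F(\lambda_2(x))$, $G_1=F'(\lambda_2(x))$), and compute the first three derivatives of $u(x)=\mu(x)^2F_0^{\,2}$, respectively of $v(x)=-\tfrac{2\pi}{\sqrt3\,(x+1)}F_0G_0$; here $\mu(x)\mu_2(x)=\sqrt2/(x+1)$, so indeed $v=c_1gh$. Every derivative of $F$ beyond the first is removed using the Gauss equation, so $u,u',u'',u'''$ become explicit combinations of $F_0^{\,2},F_0F_1,F_1^{\,2}$, and $v,v',v'',v'''$ combinations of $F_0G_0,F_0G_1,F_1G_0,F_1G_1$, with coefficients in the quadratic ring $\mathbb Q(\sqrt2)(x)[s]/(s^2-(x^2-34x+1))$, inside which $\mu^2=(3-3x-s)/(2(x+1)^2)$, $\lambda=(x^3+30x^2-24x+1-(x^2-7x+1)s)/(2(x+1)^3)$, and $\mu_2^2,\lambda_2$ are the images under $s\mapsto-s$. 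Substituting into the left side of (\ref{DE3}) and reducing modulo $s^2=x^2-34x+1$, the coefficient of each monomial in the $F_i$ (resp.\ the $F_iG_j$) should vanish identically in $x$. A CAS does this at once; by hand it is the ``paper-and-pencil'' computation the notes refer to. A worthwhile sanity check is evaluation at one ordinary point, to confirm that no spurious rational factor has crept in.

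The boundary conditions then come straight from Lemma~\ref{fghyper}: $g(x)=1+\tfrac52x+O(x^2)$ gives $u(x)=g(x)^2=1+5x+O(x^2)=1+o(1)$, which is the defining property of $u_0$ (and recovers $A_1=5$); and $h(x)=-\tfrac{\sqrt3}{\pi\sqrt2}\log x+o(1)$ together with $c_1=-2\pi/\sqrt6$ gives $v(x)=c_1g(x)h(x)=\log x+o(1)$, the defining property of $v_0$. Positivity of $\mu,\mu_2$ on $(0,\co)$ (Lemma~\ref{mulambda}) and of $F(\lambda),F(\lambda_2)$ there (Lemma~\ref{gauss}(b), since $0<\lambda,\lambda_2<1$) selects the intended branch of each square root, and incidentally re-proves $u_0>0$ on $(0,\co)$. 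The one genuinely laborious step, I expect, is the algebraic reduction just described: one substitutes degree-at-most-two algebraic functions of $x$ into a third-order operator and must watch the radical $s=\sqrt{x^2-34x+1}$ (and, inside $\mu^2$, the combination $3-3x-s$) cancel exactly; keeping everything inside that quadratic ring, and using the ``norm'' identity $\mu(x)\mu_2(x)=\sqrt2/(x+1)$ to handle $v_0$ entirely rationally, is what keeps the bookkeeping under control.
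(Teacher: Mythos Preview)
Your approach is essentially the paper's own: verify that each right-hand side satisfies (\ref{DE3}) and then match the defining asymptotics via Lemma~\ref{fghyper}, using the identity $\mu(x)\mu_2(x)=\sqrt{2}/(x+1)$ to rewrite $v_0$ as $c_1gh$. The paper's proof is a three-line version of exactly this; your added symmetric-square heuristic and the explicit reduction scheme in the quadratic ring $\mathbb Q(x)[s]/(s^2-(x^2-34x+1))$ are helpful elaborations but not a different route.
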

\begin{proof}
Note: $\mu(x)\mu_2(x) = \sqrt{2}/(x+1)$.
Verify that these expressions
satisfy (\ref{DE3}) as usual.  Then
verify the asymptotics using Lemma~\ref{fghyper}.
\end{proof}

How were these formulas found?  The first one is from
Mark van Hoeij
\cite[\texttt{A005259}]{OEIS};
I do not know how he found it.  But then
it is natural to try the other
\sr, since that will still satisfy the same
differential equation.

\begin{pr}\label{v0neg}
$v_0(x) < 0$ for $0 < x < \co$.
\end{pr}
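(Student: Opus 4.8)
The plan is to read the sign of $v_0$ off the closed form established in Proposition~\ref{uhyper},
$$
	v_0(x) = - \frac{2\pi}{\sqrt{3}\,(x+1)}
	\;{}_2F_1\left(\frac{1}{3},\frac{2}{3};1;\lambda(x)\right)
	\;{}_2F_1\left(\frac{1}{3},\frac{2}{3};1;\lambda_2(x)\right),
	\qquad 0<x<\co ,
$$
and then to sign each factor on the right. There is nothing clever needed once this representation is in hand.

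First I would note that on $(0,\co)$ the prefactor $-2\pi/\bigl(\sqrt{3}\,(x+1)\bigr)$ is strictly negative, since $x+1>0$. Next I would invoke Lemma~\ref{mulambda}, which gives $0<\lambda(x)<1$ and $0<\lambda_2(x)<1$ for every $x\in(0,\co)$; in particular both arguments lie in the interval $(0,1)$ where the Gauss series converges (Lemma~\ref{gauss}(a)) and where, by Lemma~\ref{gauss}(b), the value of ${}_2F_1\left(\frac{1}{3},\frac{2}{3};1;\cdot\right)$ exceeds $1$. Hence each of the two hypergeometric factors is $>1>0$, so their product is positive. Combining the three factors, $v_0(x)$ is a strictly negative quantity times a positive quantity, so $v_0(x)<0$ on $(0,\co)$, which is the assertion.

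The only point deserving a sentence of care is whether the formula of Proposition~\ref{uhyper} is valid throughout $(0,\co)$ and not merely near the endpoint $0$ — and that is where I expect whatever (slight) difficulty there is to sit. It is in fact already secured: the right-hand side is assembled from $\lambda$ and $\lambda_2$, which by Lemma~\ref{mulambda} remain in $(0,1)$ precisely for $0<x<\co$, so the right-hand side is a genuine real-analytic solution of (DE3) on that whole interval; Lemma~\ref{fghyper} then pins its behavior down to $\log x + o(1)$ as $x\to 0^+$, and by the uniqueness (from Proposition~\ref{frobenius}) of the solution with that prescribed local form it must coincide with $v_0$ of Notation~\ref{basicnotation} on all of $(0,\co)$. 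With that settled, Proposition~\ref{v0neg} is an immediate corollary of Proposition~\ref{uhyper}, Lemma~\ref{mulambda}, and Lemma~\ref{gauss}(b); the substantive work — the positivity facts feeding those lemmas and the derivation of the hypergeometric representation itself — has all been done upstream.
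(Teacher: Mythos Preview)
Your proof is correct and follows essentially the same route as the paper: invoke the hypergeometric representation of $v_0$ from Proposition~\ref{uhyper}, use Lemma~\ref{mulambda} to place $\lambda(x),\lambda_2(x)$ in $(0,1)$, and then Lemma~\ref{gauss}(b) to make both ${}_2F_1$ factors positive, so the negative prefactor decides the sign. Your additional paragraph justifying the validity of the formula on all of $(0,\co)$ is a reasonable elaboration, but the paper treats this as already settled by Proposition~\ref{uhyper}.
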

\begin{proof}  For $0 < x < \co$:
By Lemma~\ref{mulambda}, $0<\lambda(x)<1$, so by
Lemma~\ref{gauss}(b),
$\;{}_2F_1\left(\frac{1}{3},\frac{2}{3};1;\lambda(x)\right) > 0$.
Similarly,
$\;{}_2F_1\left(\frac{1}{3},\frac{2}{3};1;\lambda_2(x)\right)>0$.
\end{proof}

\subsection*{The Two Endpoints}

\begin{pr} \label{alphav2}
On interval $(\co,c)$ we have exactly
$$
\phi(x) = \frac{v_2(x)}{2^{5/4}(\sqrt2+1)^4\pi^2}.
$$
\end{pr}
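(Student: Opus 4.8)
The idea is to show first that $\phi$ is forced to be a scalar multiple of $v_2$ on $(\co,c)$, and then to evaluate that scalar by a connection-coefficient computation at $c$ that feeds into Lemma~\ref{Fderiv}.

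First I would record the structure of $u_\infty$ near $c$. By Prop.~\ref{frobenius} the solution space of (\ref{DE3}) at $c$ is spanned by two solutions analytic at $c$ (leading terms $1$ and $(x-c)$) together with the solution $v_2$ with leading term $(c-x)^{1/2}$. Since $u_\infty$ is single-valued and analytic on $\C\setminus[0,c]$, near $c$ it can be written $u_\infty=A\,u_A+B\,v_2+C\,u_C$ with $u_A,u_C$ analytic at $c$, and the constants $A,B,C$ are real because $u_\infty$ is real on $(c,\infty)$, where $u_A,u_C,v_2$ are all real. Passing to the two sides of the cut only flips the branch of $(c-x)^{1/2}$, so $u_\infty(x-i0)-u_\infty(x+i0)=-2iB\,v_2(x)$ for $x$ just left of $c$; since both sides are solutions of (\ref{DE3}) on $(\co,c)$ that agree near $c$, they agree on all of $(\co,c)$. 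Hence $\phi=-(B/\pi)\,v_2$ on $(\co,c)$, and the whole problem reduces to finding $B$, the coefficient of $(x-c)^{1/2}$ in the Frobenius expansion of $u_\infty$ at $c$. (Consistency check: $B<0$ is what makes $\phi$ positive there, given $v_2>0$ from Prop.~\ref{v2pos}.)

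Next I would compute $B$ by pushing the known representations of $u_\infty$ to the endpoint. Using $u_\infty(z)=z^{-1}u_0(1/z)$ together with $u_0(x)=\mu(x)^2\,{}_2F_1\!\left(\frac13,\frac23;1;\lambda(x)\right)^2$ from Prop.~\ref{uhyper} (both valid up to $x=\co$ by continuity), put $y=1/z$ and $t=\sqrt{\co-y}$, so that $y\to\co^-$ as $z\to c^+$. Because $\sqrt{y^2-34y+1}=\sqrt{(c-y)(\co-y)}$ is analytic in $t$ and vanishes to first order at $t=0$, the functions $\mu(y)$, $\lambda(y)$ and hence $u_0(y)=g_0+g_1t+O(t^2)$ are analytic in $t$. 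Since $t^2=\co-1/z=\co(z-c)/z$, the coefficient of $(z-c)^{1/2}$ in $u_\infty(z)=z^{-1}u_0(1/z)$ equals $\co^2g_1$, i.e.\ $B=\co^2g_1$. Expanding the product, $g_1=2\mu(\co)\,S_0\bigl(\mu_1S_0+\mu(\co)\lambda_1S_1\bigr)$, where $S_0={}_2F_1\!\left(\frac13,\frac23;1;\lambda(\co)\right)$, $S_1={}_2F_1{}'\!\left(\frac13,\frac23;1;\lambda(\co)\right)$, and $\mu_1,\lambda_1$ are the $t$-linear coefficients of $\mu,\lambda$ at $\co$, namely $\mu_1=-\mu(\co)\sqrt{c-\co}/\bigl(6(1-\co)\bigr)$ and $\lambda_1=-(\co^2-7\co+1)\sqrt{c-\co}/\bigl(2(\co+1)^3\bigr)$.

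Finally I would force the collapse. The inputs are four elementary identities: $\lambda(\co)=\frac{2-\sqrt2}{4}$ (so $\lambda(\co)$ is exactly the point $z_0$ of Lemma~\ref{Fderiv}), $(\co+1)^2=36\co$, $\sqrt2\,(\co^2-7\co+1)(1-\co)=(\co+1)^3$, and $c-\co=24\sqrt2$. The third identity turns $\mu_1S_0+\mu(\co)\lambda_1S_1$ into a positive multiple of $\sqrt2\,S_0+3S_1$, and Lemma~\ref{Fderiv} then replaces $S_0(\sqrt2\,S_0+3S_1)$ by $3^{3/2}2^{1/2}/\pi$; combining this with $\mu(\co)^2=3(1-\co)/\bigl(2(\co+1)^2\bigr)$, $(\co+1)^2=36\co$, $c-\co=24\sqrt2$ and $\co=(\sqrt2+1)^{-4}$ reduces $B$ to $-1/\bigl(2^{5/4}(\sqrt2+1)^4\pi\bigr)$, whence $\phi=-(B/\pi)v_2=v_2/\bigl(2^{5/4}(\sqrt2+1)^4\pi^2\bigr)$ on $(\co,c)$. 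I expect the main obstacle to be this endpoint connection computation — keeping the square-root branch consistent through the inversion $z\mapsto1/z$, and verifying the algebraic identities that make the $({}_2F_1,{}_2F_1{}')$ combination coincide with the one evaluated by Lemma~\ref{Fderiv}; the first two paragraphs above are comparatively soft.
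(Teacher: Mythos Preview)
Your proposal is correct and follows essentially the same route as the paper: first deduce from the Frobenius structure at $c$ that $\phi=-(B/\pi)v_2$ on $(\co,c)$, then compute the coefficient $B$ of the $\sqrt{x-c}$ term in $u_\infty$ near $c$ via the hypergeometric representation $u_\infty(z)=z^{-1}\mu(1/z)^2\,{}_2F_1(\tfrac13,\tfrac23;1;\lambda(1/z))^2$ and Lemma~\ref{Fderiv}. The only cosmetic difference is that the paper expands directly in $\sqrt{\delta}$ with $x=c+\delta$ on $(c,\infty)$, while you parametrize by $t=\sqrt{\co-y}$ with $y=1/z$; since $t\sim \co\sqrt{z-c}$ these are the same expansion, and your algebraic identities $(\co+1)^2=36\co$, $c-\co=24\sqrt2$, $\sqrt2\,(\co^2-7\co+1)(1-\co)=(\co+1)^3$ package the same arithmetic the paper carries out term by term.
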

\begin{proof}
We examine the solution $u_\infty(x)$  of (\ref{DE3}) on
the interval $(c,+\infty)$.
As $\delta \to 0^+$, the Frobenius series solution shows that
\begin{equation}\label{uinfc}
	u_\infty(c+\delta) = A + B\sqrt{\delta} + C\delta +O(\delta^{3/2}) 
\end{equation}
for some real constants $A,B,C$;
we will have to evaluate the constant $B$ below.  Following (\ref{uinfc})
around the point $c$ by a half-turn in either direction, we get
\begin{align*}
	u_\infty(c-\delta-i0) &=
	A + B (-i) \sqrt{\delta} - C\delta +O(\delta^{3/2})
	\\
	u_\infty(c-\delta+i0) &=
	A + B i \sqrt{\delta} - C\delta +O(\delta^{3/2})
	\\
	\phi(c-\delta) &=
	\frac{1}{2\pi i}\big(u_\infty(c-\delta-i0)
	 - u_\infty(c-\delta+i0)\big)
	\\ & = \frac{0 A - 2 B i\sqrt{\delta} + 0 C\delta}{2 \pi i}
	+O(\delta^{3/2})
	\\ &=
	\frac{-B}{\pi}\sqrt{\delta} + O(\delta^{3/2}).
\end{align*}
Therefore $\phi(x) = (-B/\pi) v_2(x)$ on $(\co,c)$.

On interval $(c,+\infty)$, we have
$$
	u_\infty(x) =
	\frac{1}{x}
	\mu\left(\frac{1}{x}\right)^2
	{}_2F_1\left(\frac{1}{3},\frac{2}{3};1;
	\lambda\left(\frac{1}{x}\right)\right)^2 .
$$
The value $\lambda(1/x)$ stays inside the unit disk, so
no analytic continuation is required.  
Now $\lambda(\co) = \lambda_2(\co) = 1/(2^{3/2}(\sqrt2+1))$,
called $z_0$ in Lemma~\ref{Fderiv}.  Let $S_0, S_1$ also
be as in Lemma~\ref{Fderiv}.
As $\delta \to 0^+$,
\begin{align*}
	\frac{1}{c+\delta} &= \frac{1}{(\sqrt2+1)^4} +O(\delta)
	\\
	\mu\left(\frac{1}{c+\delta}\right) &= 
	\frac{(\sqrt2+1)}{2^{1/4}3^{1/2}} 
	- \frac{1}{4\cdot3\cdot(\sqrt2+1)} \sqrt{\delta} +O(\delta)
	\\
	\lambda\left(\frac{1}{c+\delta}\right) &=
	\frac{1}{2^{3/2}(\sqrt2+1)} 
	- \frac{\sqrt{3}}{2^{9/4}(\sqrt2+1)^2}\sqrt{\delta} +O(\delta)
	\\
	{}_2F_1\left(\frac{1}{3},\frac{2}{3};1;
	\lambda\left(\frac{1}{c+\delta}\right)\right) &=
	S_0 - \frac{\sqrt{3}}{2^{9/4}(\sqrt2+1)^2}S_1\sqrt{\delta} + O(\delta)
	\\
	u_\infty(c+\delta) &=
	\frac{S_0^2}{3\sqrt{2}(\sqrt2+1)^2}
	- \frac{S_0(3S_1+\sqrt{2}S_0)}{
	2^{7/4}3^{3/2}(\sqrt2+1)^4}\sqrt{\delta}+O(\delta) 
	\\ &=
	\frac{S_0^2}{3\sqrt{2}(\sqrt2+1)^2}
	- \frac{1}{2^{5/4}(\sqrt2+1)^4\pi}\sqrt{\delta}+O(\delta) .
\end{align*}
So we get $B=-1/(2^{5/4}(\sqrt2+1)^4\pi)$.
\end{proof}

\begin{pr}\label{alphav0}
On interval $(0,\co)$ we have exactly
$\phi(x) = -6v_0(x)/\pi^2$.
\end{pr}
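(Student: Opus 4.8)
\emph{Proof plan.} The idea is to determine the Frobenius decomposition of $\phi$ at the singular point $0$, in complete parallel with the proof of Proposition~\ref{alphav2}; the only structural change is that the relevant exponent at $0$ is $0$ with logarithms, rather than $1/2$. First reduce to a statement about $u_\infty$: by (\ref{cutsymm}), $u_\infty(x-i0)=\overline{u_\infty(x+i0)}$ on $(0,c)$, so
\[
\phi(x)=\frac{1}{2\pi i}\bigl(u_\infty(x-i0)-u_\infty(x+i0)\bigr)=-\frac{1}{\pi}\,\mathrm{Im}\,u_\infty(x+i0).
\]
Near $0$ the boundary value $u_\infty(\cdot+i0)$ is the restriction of the analytic continuation $U$ of $u_\infty$ from the upper half plane, which is a solution of (\ref{DE3}) on a punctured neighborhood of $0$; by Proposition~\ref{frobenius}, $U=\alpha\,u_0+\beta\,v_0+\gamma\,w_0$ for complex constants $\alpha,\beta,\gamma$, where $u_0,v_0$ are as in Notation~\ref{basicnotation} and $w_0$ is the remaining Frobenius solution (leading behavior $(\log x)^2$). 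Since $u_0,v_0,w_0$ are real on $(0,\co)$, on that interval $\phi=-\tfrac1\pi(\mathrm{Im}\,\alpha)u_0-\tfrac1\pi(\mathrm{Im}\,\beta)v_0-\tfrac1\pi(\mathrm{Im}\,\gamma)w_0$, so it suffices to prove $\mathrm{Im}\,\alpha=\mathrm{Im}\,\gamma=0$ and $\mathrm{Im}\,\beta=6/\pi$.

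To extract $\alpha,\beta,\gamma$ I would evaluate $U$ along the negative real axis, where it coincides with $u_\infty$ (the two are connected through the upper half plane, and $u_\infty$ is already analytic off $[0,c]$). Writing $z=-\delta$, so that $\log z=\log\delta+i\pi$ on this branch, the expansion $U=\alpha u_0+\beta v_0+\gamma w_0$ yields
\[
u_\infty(-\delta)=\gamma(\log\delta)^2+(\beta+2\pi i\gamma)\log\delta+\bigl(\alpha-\pi^2\gamma+\pi i\beta+\gamma\kappa\bigr)+o(1)\qquad(\delta\to0^+),
\]
where $\kappa$ is the (real) constant term of $w_0$. On the other hand, by the identity $u_\infty(z)=z^{-1}u_0(1/z)$ established above (valid on all of $\mathbb C\setminus[0,c]$ by uniqueness of solutions), $u_\infty(-\delta)=-\delta^{-1}u_0(-1/\delta)$, with $u_0$ taken in its analytic continuation along the negative real axis. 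That continuation is unobstructed, since the singularities of (\ref{DE3}) all lie in $[0,\infty)\cup\{\infty\}$; and it is computed by the first formula of Proposition~\ref{uhyper}, because $\mu^2$ and $\lambda$ are real-analytic on $(-\infty,\co)$ (the apparent singularity of each at $w=-1$ is removable, and $w^2-34w+1=(w-\co)(w-c)>0$ there), and $\lambda(w)<1$ on $(-\infty,0)$ with $\lambda(w)\to1^-$ as $w\to-\infty$ (elementary inequalities in the spirit of Lemmas~\ref{mulambda} and~\ref{gauss}), so ${}_2F_1(\tfrac13,\tfrac23;1;\lambda(w))$ stays on its principal branch.

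The computation then runs as follows: $\mu(-1/\delta)^2=\delta+O(\delta^2)$ and $1-\lambda(-1/\delta)=27\delta^2+O(\delta^3)$, so by Lemma~\ref{gauss}(c), using $\log 27=3\log 3$ so the constant cancels, ${}_2F_1(\tfrac13,\tfrac23;1;\lambda(-1/\delta))=-\tfrac{\sqrt3}{\pi}\log\delta+o(1)$; hence $u_0(-1/\delta)=\tfrac{3\delta}{\pi^2}(\log\delta)^2+o(\delta)$ and $u_\infty(-\delta)=-\tfrac{3}{\pi^2}(\log\delta)^2+o(1)$. Comparing with the Frobenius expansion above and using the asymptotic independence of $1,\log\delta,(\log\delta)^2$ forces $\gamma=-3/\pi^2$, $\beta=6i/\pi$, and $\alpha$ real; thus $\mathrm{Im}\,\gamma=\mathrm{Im}\,\alpha=0$ and $\mathrm{Im}\,\beta=6/\pi$, so $\phi=-\tfrac{6}{\pi^2}v_0$ in a neighborhood of $0$, and therefore on all of $(0,\co)$ since both sides solve (\ref{DE3}).

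The main obstacle is the third step: one must justify that the hypergeometric representation of $u_0$ genuinely continues to the negative axis with ${}_2F_1$ on its principal branch (so that no spurious logarithmic terms appear), which rests on the branch-and-sign bookkeeping for $\mu^2$ and on the inequality $\lambda<1$ there; and one must keep the $o(\cdot)$ estimates sharp enough to read off the coefficient of $\log\delta$ (not merely of $(\log\delta)^2$), which is exactly where the cancellation $\log 27=3\log 3$ is used.
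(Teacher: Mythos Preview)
Your proof is correct and follows essentially the same route as the paper: express $u_\infty$ near $0$ in the Frobenius basis, compute its expansion on the negative real axis via the hypergeometric formula of Proposition~\ref{uhyper} (with the same $\log 27=3\log 3$ cancellation), and read off the coefficients.

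The only real difference is bookkeeping. The paper writes $u_\infty(-\delta)=A+B\log\delta+C(\log\delta)^2+o(1)$ with \emph{real} $A,B,C$ (since $u_\infty$ is real on $(-\infty,0)$), continues by a half-turn in each direction, and subtracts; the $(\log\delta)^2$ terms cancel automatically, giving $\phi=Bu_0+2Cv_0$ with no $w_0$ component and no need to examine the constant term at all. You instead expand the upper boundary value $U=\alpha u_0+\beta v_0+\gamma w_0$ with complex coefficients and must then check $\mathrm{Im}\,\gamma=\mathrm{Im}\,\alpha=0$ separately. For $\gamma$ this is immediate; for $\alpha$ your appeal to ``asymptotic independence'' is not quite enough, since squaring $-\tfrac{\sqrt3}{\pi}\log\delta+o(1)$ only yields $\tfrac{3}{\pi^2}(\log\delta)^2+o(\log\delta)$, which does not pin down the constant term. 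The gap is harmless: either observe that $u_\infty(-\delta)$ is real (so the constant $\alpha+i\pi\beta-\pi^2\gamma+\gamma\kappa$ is real, forcing $\alpha\in\mathbb R$ once $\beta,\gamma$ are known), or note that the $o(1)$ in Lemma~\ref{gauss}(c) is in fact $O(s\log s)$, which upgrades your estimate to a genuine $o(1)$. The paper's organization simply avoids having to say this.
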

\begin{proof}
We examine the solution $u_\infty(x)$  of (\ref{DE3}) on
the interval $(-\infty,0)$.
As $\delta \to 0^+$, the Frobenius series solution shows that
\begin{equation}\label{uinf0}
	u_\infty(-\delta) = A + B\log\delta + C(\log \delta)^2
	+ o(1)
\end{equation}
for some real constants $A,B,C$;
we will have to evaluate the constants $B$ and
 $C$ below.  Following (\ref{uinf0})
around the point $0$ by a half-turn in either direction, we get
\begin{align*}
	u_\infty(\delta-i0) &=
	A + B (\log\delta + i\pi) + 
	C(\log\delta+ i\pi)^2 +o(1)
	\\
	u_\infty(\delta+i0) &=
	A + B (\log\delta - i\pi) + 
	C(\log\delta- i\pi)^2 +o(1)
	\\
	\phi(\delta) &=
	\frac{1}{2\pi i}\big(u_\infty(\delta-i0)
	 - u_\infty(\delta+i0)\big)
	\\ & =
	\frac{2 B i \pi + 4 C i \pi \log \delta}{2\pi i} + o(1)
	\\ &=
	B + 2 C \log \delta + o(1)
\end{align*}
Therefore $\phi(x) = B u_0(x) + 2 C v_0(x)$
on $(0,\co)$.

On interval $(-\infty, 0)$, we have
$$
	u_\infty(x) =
	\frac{1}{x}
	\mu\left(\frac{1}{x}\right)^2
	{}_2F_1\left(\frac{1}{3},\frac{2}{3};1;
	\lambda\left(\frac{1}{x}\right)\right)^2 .
$$
Argument $\lambda(1/x)$ stays inside the unit disk, so
this is an easy
analytic continuation of $u_\infty$.  As $\delta \to 0^+$,
\begin{align*}
	\frac{1}{-\delta} &= \frac{-1}{\delta} +O(1)
	\\
	\mu\left(\frac{1}{-\delta}\right)^2 &= 
	\delta+O(\delta^2)
	\\
	\lambda\left(\frac{1}{-\delta}\right) &=
	1-27\delta^2+O(\delta^3)
\end{align*}
So by Lemma~\ref{gauss}(c),
\begin{align*}
	{}_2F_1\left(\frac{1}{3},\frac{2}{3};1;
	\lambda\left(\frac{1}{-\delta}\right)\right) &=
	\frac{-\sqrt{3}}{2\pi}\;2\log\delta + o(1)
	\\
	u_\infty(-\delta) &= \frac{-3}{\pi^2}(\log \delta)^2 + o(1) .
\end{align*}
Thus we get $B=0$ and $C=-3/\pi^2$.
\end{proof}

\begin{cor}\label{phipos}
The moment density $\phi$ may be written
$$
	\phi(x) = \begin{cases}
	\displaystyle \;\frac{-6}{\pi^2} \,v_0(x), & 0 < x < \co,
	\\{}\\
	\displaystyle \;
	\frac{1}{2^{5/4}(\sqrt2+1)^4\pi^2} \,v_2(x) , &\co \le x \le c .
	\end{cases}
$$
It is positive on $(0,\co) \cup (\co,c)$.
\end{cor}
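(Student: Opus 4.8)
This corollary is essentially a bookkeeping assembly of the four preceding results, one pair for each subinterval. The plan is as follows. On $(0,\co)$, Proposition~\ref{alphav0} gives $\phi(x) = -6 v_0(x)/\pi^2$, while Proposition~\ref{v0neg} gives $v_0(x) < 0$; multiplying by the negative constant $-6/\pi^2$ yields $\phi(x) > 0$ on $(0,\co)$. On $(\co,c)$, Proposition~\ref{alphav2} gives $\phi(x) = v_2(x)/\bigl(2^{5/4}(\sqrt2+1)^4\pi^2\bigr)$, while Proposition~\ref{v2pos} gives $v_2(x) > 0$; since the prefactor is a positive constant, $\phi(x) > 0$ on $(\co,c)$. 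That already establishes the displayed formula on the two open intervals and positivity on $(0,\co)\cup(\co,c)$.

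The only remaining point is to see that the second branch of the formula holds on the \emph{closed} interval $[\co,c]$ as written, i.e.\ that $\phi$ and $v_2$ each extend continuously to the endpoints $\co$ and $c$ and that the identity persists there. For $v_2$ this is immediate from Proposition~\ref{frobenius}: near each endpoint $v_2$ is a linear combination of the three local Frobenius solutions, each of which (of type ``analytic'', ``$(x-\cdot)^{1/2}$'', or ``linear'') is bounded and continuous up to that endpoint; in particular the $(c-x)^{1/2}$ leading term shows $v_2(c)=0$. For $\phi$, the corollary following Proposition~\ref{frobenius} shows $u_\infty$ has at worst logarithmic singularities along $[0,c]$, so the boundary values $u_\infty(x\pm i0)$ are continuous up to $x=\co$ and $x=c$, hence so is $\phi(x)=\tfrac{1}{2\pi i}\bigl(u_\infty(x-i0)-u_\infty(x+i0)\bigr)$. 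Two continuous functions on $[\co,c]$ that agree on the dense subset $(\co,c)$ agree everywhere on $[\co,c]$, which gives the stated formula there.

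I do not expect any real obstacle: every analytic input has already been proved, and the endpoint extension is handled purely by the boundedness of the Frobenius solutions. One should note only that no claim is being made that the two branches agree at $\co$ --- the first branch is asserted solely on the open interval $(0,\co)$ --- so $\phi$ is permitted to have a jump or corner at $\co$, consistent with the enlarged picture in Figure~\ref{fig:Uall}.
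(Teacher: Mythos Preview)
Your core argument is exactly the paper's: combine Propositions~\ref{alphav0} and~\ref{alphav2} for the formula, then Propositions~\ref{v0neg} and~\ref{v2pos} for the sign. The paper's own proof is two lines and does nothing more than this.

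Your additional paragraph on the endpoints $\co$ and $c$ goes beyond what the paper supplies (the paper simply ignores the open/closed discrepancy, which is harmless since only integrals of $\phi$ matter). One small slip there: citing the corollary after Proposition~\ref{frobenius} (``at worst logarithmic singularities'') does \emph{not} give continuity of $u_\infty(x\pm i0)$ up to the endpoints --- a logarithmic singularity is unbounded. The correct justification is the one you already gave for $v_2$: at $\co$ and $c$ the Frobenius basis consists of analytic and square-root type solutions with no logarithms, so every local solution, including the boundary values of $u_\infty$, extends continuously there. With that fix your endpoint argument is fine, though unnecessary for the paper's purposes.
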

\begin{proof}
For $0 < x < \co$, by Prop.~\ref{v0neg}
$v_0(x) < 0$.  For $\co < x < c$, by Prop.~\ref{v2pos}
$v_2(x) > 0$.
\end{proof}

This completes the proof of Theorem~\ref{THM}.

\subsection*{Appendix: Lemma 19}
\emph{Let the degree $1$ Taylor polynomial for
${}_2F_1\left(\frac{1}{3},\frac{2}{3};1;z\right)$ at
$z_0=1/(2^{3/2}(1+\sqrt2\;))$ be
${}_2F_1\left(\frac{1}{3},\frac{2}{3};1;z\right)
= S_0 + S_1 \cdot(z-z_0) + o(|z-z_0|)$ as $z \to z_0$.
Then}
$$
	S_0\cdot(3S_1+\sqrt{2}S_0) = \frac{3^{3/2}2^{1/2}}{\pi} .
$$
We will see that
a proof can be given using modular parameterizations.
See \cite{apostol, DS, LMFDB}.

\emph{Notation.} Let $\tau$ be a complex variable with 
$\operatorname{Im} \tau > 0$
and let $q = e^{2\pi i \tau}$ so that $0 < |q|<1$.
Thus
$$
	\frac{dq}{d \tau} = 2\pi i e^{2 \pi i \tau} = 2 \pi i q,
	\qquad
	\frac{d \tau}{dq} = \frac{1}{2\pi i q} .
$$
We will use a  derivation $\th$ defined by
$$
	\th = q\;\frac{d}{dq} = \frac{1}{2\pi i}\;\frac{d}{d\tau} .
$$
Sometimes this derivative will be written as
a superscript:
$$
	q\frac{d}{dq}f =
	\frac{1}{2\pi i}\;\frac{d}{d\tau} f =
	\th f =
	f^{\th}(\tau) .
$$

We will use the following functions:  The Dedekind eta function,
\begin{equation}\label{etaprod}
	\eta = q^{1/24}\prod_{n=1}^\infty (1-q^n) .
\end{equation}
One of the McKay-Thompson modular functions \cite[\texttt{A030182}]{OEIS}
\begin{align*}
	j_{3B}(\tau) &= \frac{\eta(\tau)^{12}}{\eta(3\tau)^{12}}
	\\ &
	=q^{-1} - 12 + 54q - 76q^2 - 243q^3 + 1188q^4 
	- 1384q^5
	\\ &\qquad\qquad - 2916q^6 + 11934q^7 - 11580q^8 - 21870q^9 + \dots
\end{align*}
And: the Eisenstein series $E_2$;
the logarithmic derivative of $\eta^{24}$, in the sense
$$
	E_2 = 24\;\frac{\eta^{\th}}{\,\eta\,} .
$$
The Fourier series is
 \cite[\texttt{A006352}]{OEIS}
$$
	E_2 = 1-24\sum_{k=1}^\infty \sigma(k) q^k ,
$$
where the number-theoretic function $\sigma(k)$ is the sum of the (positive integer) divisors
of $k$.  This is from logarithmic differentiation of $(\ref{etaprod})$.

Logarithmic differentiation of $j_{3B}$ yields
$$
	\frac{j^{\th}_{3B}(\tau)}{j_{3B}(\tau)}
	= \frac{1}{2}E_2(\tau) - \frac{3}{2}E_2(3\tau) .
$$

Our calculations will require values of
$\eta$ and $E_2$ for
$\tau=i\sqrt{6}$ and $i\sqrt{6}/3$.  These values
may be written in
terms of the constant
$$
	\FF := {}_2F_1\left(\frac16,\frac13;1;\frac12\right) 
	\approx 1.0354935.
$$
In fact,
$$
	\FF = \frac{3^{1/4}\sqrt{2}}{8\pi^{3/2}}
	\sqrt{\Gamma\left(\frac{1}{24}\right)
	\Gamma\left(\frac{5}{24}\right)
	\Gamma\left(\frac{7}{24}\right)\Gamma\left(\frac{11}{24}\right)} ,
$$
but we will not need this value for the proof of Lemma 19.

For evaluation at $\tau = i\sqrt{6}/3$,
we will use these values:
\begin{align*}
	\tau&= \frac{i\sqrt{6}}{3} \\ 
	3\tau &= i\sqrt{6}\\ 
	\eta(\tau) &= \frac{2^{3/4}3^{7/8}(1+\sqrt{2}\;)^{1/12}\FF^{1/2}}{6}\\ 
	\eta(3\tau) &= \frac{2^{3/4}3^{5/8}(\sqrt{2}-1)^{1/12}\FF^{1/2}}{6}\\ 
	E_2(\tau) &=\frac{3\sqrt{6}}{2\pi}+\frac{(\sqrt{2}+1)\FF^2}{\sqrt2}\\
	E_2(3\tau) &=\frac{\sqrt{6}}{2\pi}+\frac{(\sqrt{2}+1)\FF^2}{3\sqrt2}\\
	j_{3B}(\tau) &= 27(1+\sqrt{2}\;)^2\\
	\frac{27}{j_{3B}(\tau)+27} &= \frac{1}{2}-\frac{\sqrt{2}}{4} = z_0\\
	j_{3B}^{\th}(\tau) &= 27(1+\sqrt{2}\;)^2\FF^2 .
\end{align*}

Begin with this modular parameterization for
${}_2F_1(\frac13,\frac23;1;z)$:
\begin{equation}\label{1323param}
	{}_2F_1\left(\frac13,\frac23;1;\frac{27}{j_{3B}(\tau)+27}\right)
	=
	\frac{\eta(3\tau)^3}{\eta(\tau)}\;(j_{3B}(\tau)+27)^{1/3} .
\end{equation}
This is proved using a standard method:  Each side is
a modular form of weight $2$ and level $3$.
The space of such modular forms is finite-dimensional,
so it suffices to check that a certain number of
the Fourier coefficients agree.

Write $F(z) = {}_2F_1(\frac13,\frac23;1;z)$.  That means
$S_0 = F(z_0)$ and $S_1=F'(z_0)$.  
Substitution of $\tau = i\sqrt{6}/3$ into (\ref{1323param}) yields
\begin{equation}\label{S0val}
	S_0 = \FF .
\end{equation}

Logarithmic differentiation
of (\ref{1323param}) yields
\begin{align*}
	-\frac{27}{2}
	\frac{\eta(\tau)}{\eta(\tau)^3}
	j_{3B}^{\th}(\tau)
	(E_2(\tau)-3E_2(3\tau))
	\frac{j_{3B}(\tau)}{(j_{3B}(\tau)+27)^{1/3}}
	F'\left(\frac{27}{j_{3B}(\tau)+27}\right)&
	\\ =
	\frac{9E_2(3\tau)-E_2(\tau)}{24}
	+\frac{E_2(\tau)-3E_2(3\tau)}{6}
	\frac{j_{3B}(\tau)}{j_{3B}(\tau)+27}& .
\end{align*}
Substitution of $\tau = i\sqrt{6}/3$ yields
$$
	\frac{\FF}{8}F'(z_0) = \frac{\sqrt{6}}{8\pi}
	-\frac{\sqrt{2}\FF^2}{24} ,
$$
so
\begin{equation}\label{S1val}
	S_1 = \frac{\sqrt{6}}{\pi\FF}
	-\frac{\sqrt{2}\FF}{3} .
\end{equation}
Finally, from (\ref{S0val}) and (\ref{S1val})  we obtain
$$
	S_0\;(3S_1+\sqrt{2}\;S_0) =
	\frac{3\sqrt{6}}{\pi}
$$
as claimed.

\end{document}